\documentclass[11pt]{article}

\usepackage[T1]{fontenc}
\usepackage[utf8]{inputenc}
\usepackage[margin=2.54cm]{geometry}
\usepackage{microtype}
\usepackage{amsmath,amssymb,amsthm}
\numberwithin{equation}{section}
\allowdisplaybreaks
\usepackage{enumitem}
\usepackage{xcolor}
\usepackage{aliascnt}
\usepackage{sectsty}
\sectionfont{\centering}
\usepackage[backref=section]{hyperref}
\hypersetup{
  colorlinks=true,
  linkcolor={blue!85!black},
  citecolor={red!85!black},
  urlcolor={blue!55!black}
}
\usepackage[noabbrev]{cleveref}
\usepackage{tikz}
\usetikzlibrary{arrows.meta,positioning,calc,shapes.geometric}
\theoremstyle{plain}
\newtheorem{theorem}{Theorem}[section]
\newaliascnt{lemma}{theorem}
\newtheorem{lemma}[lemma]{Lemma}
\aliascntresetthe{lemma}
\newaliascnt{proposition}{theorem}
\newtheorem{proposition}[proposition]{Proposition}
\aliascntresetthe{proposition}
\newaliascnt{problem}{theorem}
\newtheorem{problem}[problem]{Problem}
\aliascntresetthe{problem}
\crefname{problem}{Problem}{Problems}
\newaliascnt{corollary}{theorem}

\aliascntresetthe{corollary}
\newaliascnt{claim}{theorem}
\newtheorem{claim}[claim]{Claim}
\aliascntresetthe{claim}
\newaliascnt{conjecture}{theorem}

\aliascntresetthe{conjecture}
\theoremstyle{definition}
\newaliascnt{definition}{theorem}
\newtheorem{definition}[definition]{Definition}
\aliascntresetthe{definition}
\theoremstyle{plain}
\newaliascnt{question}{theorem}
\newtheorem{question}[question]{Question}
\aliascntresetthe{question}

\crefname{conjecture}{Conjecture}{Conjectures}
\crefname{theorem}{Theorem}{Theorems}
\crefname{lemma}{Lemma}{Lemmas}
\crefname{proposition}{Proposition}{Propositions}
\crefname{corollary}{Corollary}{Corollaries}
\crefname{claim}{Claim}{Claims}
\crefname{definition}{Definition}{Definitions}
\crefname{question}{Question}{Questions}
\crefname{section}{Section}{Sections}
\Crefname{section}{Section}{Sections}
\crefname{subsection}{Subsection}{Subsections}
\Crefname{subsection}{Subsection}{Subsections}
\crefname{subsubsection}{Subsection}{Subsections}
\Crefname{subsubsection}{Subsection}{Subsections}

\newcommand{\ceil}[1]{\left\lceil #1\right\rceil}
\newcommand{\floor}[1]{\left\lfloor #1\right\rfloor}
\newcommand{\disc}{\operatorname{disc}}
\newcommand{\Reg}{\mathcal R}
\newcommand{\1}{\mathbf 1}
\newcommand{\oldeg}{\overline d}
\newcommand{\ole}{\overline e}

\title{Spanning $H$-subdivisions with Prescribed Path Lengths}
\author{
Zhilan Wang\thanks{School of Mathematics, Shandong University, Jinan 250100, China. Supported by the National Natural Science Foundation of China (No.~12571373).}
\and
Shuo Wei\footnotemark[1] \thanks{Corresponding author. Email: \href{mailto:shuowei@mail.sdu.edu.cn}{\texttt{shuowei@mail.sdu.edu.cn}}}
\and
Jin Yan\footnotemark[1] 
}
\date{}

\begin{document}
\maketitle
\vspace{-1.2em}
\begin{abstract}
\noindent
We study spanning $H$-subdivisions in dense graphs where the length of
every subdivision path is prescribed in advance. This problem is
motivated in part by a question of Pavez-Sign\'e
[\emph{Combin. Probab. Comput.} 33 (2024), 121--128], who asked
whether the subdivision paths in a spanning $H$-subdivision can be
required to have similar lengths.

Let $h\ge3$ be an integer and let $0<\beta\ll\alpha\ll1/h$. We prove that,
for all sufficiently large $n$, every $n$-vertex graph $G$ with
$\delta(G)\ge n/2+\floor{h/3}$
has the following property. For every graph $H$ with $h$ edges and no isolated vertices, write $E(H)=\{e_1,\ldots,e_h\}$, and every choice of integers
$\ell_1,\ldots,\ell_h\ge4$ satisfying
$\sum_{i=1}^h\ell_i=n-|V(H)|+h$ and
$\sum_{\ell_i<\alpha n}\ell_i\le\beta n$,
the graph $G$ contains a spanning $H$-subdivision in which the $i$th
edge of $H$ is replaced by a path of length exactly $\ell_i$. We also give a family of examples showing that a linear additive term in $h$ is necessary in general.
\end{abstract}

\par\medskip
\noindent\textbf{Keywords:} Spanning subdivisions; minimum degree; prescribed path lengths; absorption method.

\par\smallskip
\noindent\textbf{2020 Mathematics Subject Classification:} 05C07, 05C35, 05C38

\section{Introduction}\label{sc1}

All graphs are finite, simple and undirected. For a positive integer
$t$, write $[t]=\{1,\ldots,t\}$. Let $H$ be a graph with
$E(H)=\{e_1,\ldots,e_h\}$. An \emph{$H$-subdivision} in a graph $G$
is specified by an injective map $\varphi\colon V(H)\to V(G)$ and
paths $P_1,\ldots,P_h$ such that, if $e_i=xy$, then $P_i$ joins
$\varphi(x)$ to $\varphi(y)$. The paths are pairwise internally
vertex-disjoint, and their internal vertices avoid $\varphi(V(H))$.
The vertices in $\varphi(V(H))$ are the \emph{branch vertices}, and
the paths $P_i$ are the \emph{subdivision paths}. The subdivision is
\emph{spanning} if its vertex set is $V(G)$. The length of a path is
its number of edges.

If $P_i$ has length $\ell_i$, then a spanning $H$-subdivision in an
$n$-vertex graph necessarily satisfies
$\sum_{i=1}^h\ell_i=n-|V(H)|+h$. We ask when the individual lengths $\ell_1,\ldots,\ell_h$ can be fixed
in advance.

Length control in subdivisions has been studied extensively in the
balanced setting. A subdivision is \emph{balanced} if all subdivision
paths have the same length. Thomassen~\cite{thomassen1984} conjectured
that sufficiently large average degree forces a balanced subdivision
of every fixed clique, and Liu and Montgomery~\cite{liu-montgomery}
resolved this conjecture. Wang~\cite{wang2023} obtained quantitative
bounds for this problem, while Luan, Tang, Wang and
Yang~\cite{luan2023} and Gil Fern\'andez, Hyde, Liu, Pikhurko and
Wu~\cite{gilfernandez2023} independently determined the correct
quadratic order of the required average degree. More recently, Kim
et al.~\cite{kim-length-2026} proved that a linear-in-$e(H)$ average
degree bound suffices for balanced subdivisions of arbitrary graphs
$H$.

For spanning subdivisions, Pavez-Sign\'e~\cite{Pavez} asked whether
similar control over the subdivision path lengths is possible.

\begin{question}[Pavez-Sign\'e~\cite{Pavez}]
\label{ques:pavez}
For every $\varepsilon>0$, does there exist a constant $C_0>0$ such
that, for every $C\ge C_0$ and every positive integer $h$, every graph
$G$ on $n=Ch$ vertices with
$\delta(G)\ge(1+\varepsilon)n/2$
contains, for every $h$-edge graph $H$ with no isolated vertices, a
spanning $H$-subdivision whose subdivision paths have similar lengths?
\end{question}

Lee~\cite{Lee} noted that his proof can be slightly modified to
produce an almost balanced spanning $H$-subdivision. Related nearly balanced spanning clique subdivision results in pseudorandom graphs were subsequently
obtained by Pavez-Sign\'e, Lee and Petrov~\cite{lee-pavez-petrov}.

A stronger form of length control is to prescribe the length of each
subdivision path in advance. Such questions also arise in graph
linkage, where the branch vertices are prescribed as well. Chizmar,
Magnant and Salehi Nowbandegani~\cite{chizmar2016} studied linkage
with almost prescribed path lengths. Coll, Magnant and Salehi
Nowbandegani~\cite{coll2019} subsequently proved a sharp Ore-type
theorem for spanning $H$-subdivisions with prescribed branch vertices
and path lengths. In the directed setting, Cheng, Wang and
Yan~\cite{cheng-wang-yan-linked} proved that, for every fixed
$h$-arc digraph $H$ without isolated vertices, every sufficiently
large digraph $D$ with $\delta^0(D)\ge n/2+h$ is arbitrary
Hamiltonian $H$-linked, in which both the branch vertices and
the subdivision path lengths are prescribed.

We show that the length of every subdivision path can be prescribed
in advance, while the branch vertices need not be specified. Our main
result is the following.

\begin{theorem}\label{thm:main}
Let $h\ge3$ be an integer and let $0<\beta\ll\alpha\ll1/h$.  Then
there exists $n_0$ such that the following holds for every $n\ge n_0$.
Let $H$ be a graph with $h$ edges and no isolated vertices, and write
$E(H)=\{e_1,\ldots,e_h\}$.  Let $G$ be an $n$-vertex graph with
$\delta(G)\ge n/2+\floor{h/3}$, and let
$\ell_1,\ldots,\ell_h\ge4$ be integers satisfying
$\sum_{i=1}^h\ell_i=n-|V(H)|+h$ and
$\sum_{\ell_i<\alpha n}\ell_i\le\beta n$.
Then $G$ contains a spanning $H$-subdivision in which, for every
$i\in[h]$, the edge $e_i$ is replaced by a path of length exactly
$\ell_i$.
\end{theorem}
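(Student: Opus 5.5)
We prove \cref{thm:main} by the absorption method, with a case split on whether $G$ is close to one of the two extremal configurations for $\delta(G)\ge n/2$. Fix a small $\varepsilon$ with $\beta\ll\varepsilon\ll\alpha$. Call $G$ \emph{$\varepsilon$-extremal} if either
\begin{itemize}[label={}]
\item (EC1) there is a set $A$ with $|A|\approx n/2$ and $e(G[A])\le\varepsilon n^2$ (near-bipartite case), or
\item (EC2) there is a partition $V(G)=A\cup B$ with $|A|\approx n/2$ and $e(A,B)\le\varepsilon n^2$ (near-disconnected case).
\end{itemize}

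\emph{Non-extremal case.} The plan has five steps.
\begin{enumerate}[label=(\roman*)]
\item Since $\delta(G)\ge n/2$ and $G$ is not extremal, $G$ has robust connectivity and is far from bipartite. We use this to build an absorbing path system. Concretely, for each long edge $e_i$ (those with $\ell_i\ge\alpha n$) we reserve a short absorbing path $A_i$ of length $\le\varepsilon n$, with fixed ends. We also fix a random reservoir set $R$ of size $\sqrt{\varepsilon}\,n$ in which any two vertices are joined by paths of every length in a range $[C,\varepsilon n]$; this is the connecting lemma.
\item We embed the branch vertices arbitrarily in $V(G)\setminus R$.
\item We realise each short edge ($\ell_i<\alpha n$, total length $\le\beta n$) directly through $R$, with exact length, using the connecting lemma. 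This is possible because $\ell_i\ge4$ and the total length is small.
\item We cover almost all remaining vertices by $h$ long paths, one per long edge, that pass through the corresponding absorber. We get these from an almost-perfect path cover via regularity or a Komlós–Sárközy–Szemerédi-type blow-up. Each path must have length slightly less than its target $\ell_i$, with deficit $d_i$, and the deficits must add up to the number of leftover vertices.
\item We distribute the leftover vertices among the absorbers, so that absorber $i$ swallows exactly $d_i$ vertices, and we close each path with exact length using $R$.
\end{enumerate}
Parity is not an obstruction here, because non-bipartiteness gives odd cycles inside the connectors.

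\emph{Extremal cases.} These force the additive term $\floor{h/3}$ and are where I expect the main difficulty.
\begin{itemize}[label={}]
\item In EC2 each side has minimum degree about $n/2$ within itself. The $H$-subdivision must cross between the two halves, and each crossing uses a disjoint $A$–$B$ edge. The condition $\delta\ge n/2+\floor{h/3}$ gives enough independent crossing edges, together with paths of controlled lengths, to route the needed edges of $H$ across. We then complete each side by a Hamiltonian-connectedness argument with prescribed lengths, since each side is nearly complete.
\item In EC1 the graph is near-complete bipartite. The difficulty is the length parities and the imbalance between $|A|$ and $|B|$. Each path of length $\ell_i$ alternates between the sides, so the counts it uses in $A$ and $B$ are determined up to the positions of its endpoints. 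Edges inside $A$ or inside $B$ can absorb the imbalance, and the surplus $\floor{h/3}$ in the degree guarantees enough such edges: about $h/3$ disjoint ones. This is what allows each path whose parity is wrong to be fixed.
\end{itemize}

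The main obstacle I anticipate is this extremal analysis. In EC1, we must choose where to place the branch vertices (on side $A$ or side $B$) and which internal edges to use, for an arbitrary $H$ and arbitrary length parities. We then need a counting argument showing that the parity/imbalance defect is at most about $h/3$, so that the $\floor{h/3}$ extra degree suffices.

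To attempt this, I would first put all branch vertices on one side and count the defect as a function of the parities of the $\ell_i$. I would then improve the placement by moving a branch vertex to the other side: one move flips the parities of all paths incident to it. Trying to prove that a good choice always leaves at most $h/3$ defects is what motivates the exact constant.

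Once the placement is fixed, we embed with the blow-up lemma on the near-complete bipartite remainder.
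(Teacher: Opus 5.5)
You leave the extremal cases, which you yourself identify as the crux, as a plan, and the counting target you propose for Extremal Case~1 is false. Your non-extremal plan (reservoir, regularity-based almost cover, vertex absorbers) is a heavier variant of a standard scheme. It looks workable once the hierarchy is fixed: a reservoir of size $\sqrt\varepsilon n$ is more than absorbers of length $\varepsilon n$ can swallow. The paper avoids all of this. It spreads pair-absorbers $z_1z_2z_3z_4$ over the long paths, takes a Hamilton path of the non-extremal remainder from Khan's lemma, and cuts it into consecutive segments of the required orders.

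The EC1 target fails as follows. If all $\ell_i$ are odd, the non-defective edges are exactly the edges of $H$ cut by your placement. So for $H=K_5$ ($h=10$, $\floor{h/3}=3$) every placement leaves at least $10-6=4$ defects, and for $K_m$ the minimum is about $h/2$.

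More fundamentally, ``one disjoint inside edge per defect'' is the wrong mechanism even when there are no defects, because the placement itself creates imbalance. For a spanning subdivision, \eqref{eq:balance} gives $|A|-|B|=D+t_A-t_B$, where $D=|U_A|+e_H(U_B)-|U_B|-e_H(U_A)$ is fixed by the placement. Take $H=K_{1,9}$ with all $\ell_i$ odd and about $n/9$, and $|A|=|B|=n/2$. Let $G$ be complete bipartite between $A$ and $B$, plus $G[A]\cong K_{3,|A|-3}$ and $G[B]\cong K_{3,|B|-3}$; then $\delta(G)=n/2+3$. Suppose the centre lies in $B$ together with $j$ leaves. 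Then $D=8-j$, so $t_B-t_A=8-j$. Each of the $j$ same-side paths has odd length and hence an odd number of inside edges, so $t_A+t_B\ge j$. Together these give $t_B\ge4$, and symmetrically $t_A\ge4$ if the centre is in $A$. Every edge of $G[B]$ meets one of three vertices, so these four inside edges cannot be disjoint, whatever placement you choose.

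The missing idea is to charge corrections per \emph{port} rather than per edge. All corrections at one branch vertex form a fan at a single vertex of high inside degree, and extra balancing corrections go in pairs through one vertex on a long path. The budget is then $\delta(G[B])\ge k+\ceil{r/2}$ ports in $B$ and $\delta(G[A])\ge k-\floor{r/2}$ ports in $A$. That is about $2k+r$ in total, not $h/3$, and $r=|B|-|A|$ may be linear in $n$. The paper shows the demand fits this budget in three parts:
\begin{itemize}
\item when $r<2h$, via $\operatorname{bal}(K)\le\floor{2h/3}$ for the once-subdivided graph $K$;
\item when $r\ge2h$, via $z_p(f)+D_f\le2k$ and \cref{linearforest};
\item to realise the ports, via a matching-versus-common-neighbourhood dichotomy in $G[Z]$.
\end{itemize}

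Extremal Case~2 has an analogous unproved counting step. There $\delta\ge n/2+k$ only guarantees about $2k+2$ disjoint crossing edges: consider two cliques whose crossing edges all meet a fixed set of $k+1$ vertices on each side. So ``enough independent crossing edges'' needs a bound on the transitions required to split the subdivision with \emph{exactly} $|X|$ vertices on one side. A careless placement can cut up to $h$ edges of $H$, which exceeds $2k+2$ once $h\ge7$. The paper gets at most $2k+2$ transitions from an ordering of $V(H)$ whose prefix cuts in $H-e^*$ are at most $\floor{2(h-1)/3}$ (\cref{correction}(iii)). It sweeps branch vertices across in this order, uses the interior of one long path to hit $|X|$ exactly, and maps the transitions onto a crossing matching obtained from K\H{o}nig's theorem.
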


For every fixed graph $H$,  \cref{thm:main} gives a stronger form of the length control asked for in \cref{ques:pavez}: subject to the stated condition on short paths, the length of every subdivision path may be prescribed in advance. In particular, the prescribed lengths of our result can be chosen to differ by at most one.

The minimum degree condition in \cref{thm:main} also connects with a
classical problem on spanning subdivisions near the Dirac threshold.
A spanning $C_3$-subdivision is precisely a Hamilton cycle, so
Dirac's theorem~\cite{dirac1952} is the basic example of a minimum
degree theorem for spanning subdivisions. Motivated by this viewpoint,
Babu and Diwan~\cite{babu-diwan} asked whether essentially the same
degree threshold suffices for every fixed graph.

\begin{problem}[Babu and Diwan~\cite{babu-diwan}]
\label{prob:babu-diwan}
For every fixed graph $H$ with no isolated vertices, does there exist
an integer $f(H)$ such that every graph $G$ of order $n\ge f(H)$ with
minimum degree at least $(n+1)/2$ contains a spanning
$H$-subdivision?
\end{problem}

The existence problem has also been studied in a uniform setting in
which $h$ is allowed to grow with $n$. Pavez-Sign\'e~\cite{Pavez} later proved an asymptotic Dirac-type theorem for sufficiently dense regular targets and conjectured that, for every $\varepsilon>0$ and all sufficiently large $C$, every graph $G$ on $n=Ch$ vertices with
$\delta(G)\ge(1/2+\varepsilon)n$
contains a spanning subdivision of every $h$-edge graph without
isolated vertices. Lee~\cite{Lee} proved this conjecture in the
stronger setting of digraphs. Wang, Cheng and
Yan~\cite{wang-cheng-yan2026} subsequently obtained the sharp
semidegree condition $\delta^0(D)\ge(n+h)/2-1$
for spanning subdivisions of $h$-arc digraphs when $n$ is sufficiently
large compared with $h$.

For every fixed graph $H$, \cref{thm:main} gives a minimum degree
condition only an additive constant above the Dirac threshold, and
hence provides a partial answer to \cref{prob:babu-diwan}. Moreover,
the conclusion is stronger in that, subject to the stated condition on
short paths, the length of every subdivision path may be prescribed
in advance.

Finally, in \cref{sec:lower-bounds}, we give a family of examples
showing that the linear dependence on $h$ in the minimum degree
condition is necessary in general when all subdivision lengths are
prescribed in advance.

\medskip
\noindent\textbf{Organization.}
In \cref{sc2}, we introduce the notation, define the two extremal
cases and give the lower bound construction. The non-extremal case is treated by absorption in \cref{sc3}. In \cref{sc4}, we establish the auxiliary tools for the extremal cases and complete the proof of \cref{thm:main}.  Finally we conclude with two open
questions in \cref{sc5}.

\section{Preliminaries}\label{sc2}

\subsection{Notation}
All graphs are finite, simple and undirected. For a positive integer $t$,
write $[t]=\{1,\ldots,t\}$. For a graph $G$, let
$|G|=|V(G)|$ and $e(G)=|E(G)|$. If $X\subseteq V(G)$, then $G[X]$
denotes the subgraph induced by $X$, and we write $e(X)=e(G[X])$.
For disjoint sets $X,Y\subseteq V(G)$, let $G[X,Y]$ be the bipartite
subgraph consisting of the edges between $X$ and $Y$, and write
$e(X,Y)=e(G[X,Y])$. Finally, for $v\in V(G)$ and
$X\subseteq V(G)$, write $N_G(v,X)=N_G(v)\cap X$ and
$d_G(v,X)=|N_G(v,X)|$.

We omit the subscript when the ambient graph is clear. For disjoint
$X,Y\subseteq V(G)$, set $\oldeg_Y(x)=|Y\setminus N(x)|$, $x\in X$ and  $\ole(X,Y)=|X||Y|-e(X,Y)$.

The order of a path is its number of vertices, and its length is its
number of edges. Unless stated otherwise, disjoint subgraphs are
vertex-disjoint.

If $L$ is bipartite, define
$\disc(L):=\min\{\bigl||P|-|Q|\bigr|:(P,Q)\text{ is a bipartition of }L\}$.
When $L$ is disconnected, the two bipartition classes of each component
may be interchanged independently. For an arbitrary graph $K$, let $\operatorname{bal}(K)$ denote the minimum number of vertices whose deletion leaves a bipartite graph with
two equally sized bipartition classes. We use the standard hierarchy notation for constants.

\subsection{The extremal cases}
For $\varepsilon>0$, we call a graph $G$
\emph{$\varepsilon$-non-extremal} if neither of the following conditions
holds.

\smallskip

\noindent\textbf{Extremal Case 1.}
There is a set $A\subset V(G)$ such that
$|A|\ge n/2-\varepsilon n$ and $e(A)\le\varepsilon n^2$.

\smallskip

\noindent\textbf{Extremal Case 2.}
There is a set $A\subset V(G)$ with
$n/2-\varepsilon n\le |A|\le n/2$ such that, for
$B=V(G)\setminus A$, we have $e(A,B)\le\varepsilon n^2$.

\begin{figure}[htbp]
\centering
\begin{tikzpicture}[
    line/.style={draw=black, line width=0.9pt, line join=round},
    lab/.style={font=\large\itshape},
    sublab/.style={font=\normalsize}
]

\def\xrad{0.85cm}
\def\yrad{1.55cm}

\begin{scope}
    \draw[line, fill=white] (0,0) ellipse [x radius=\xrad, y radius=\yrad];
    \draw[line, fill=white] (3.7,0) ellipse [x radius=\xrad, y radius=\yrad];

    \draw[line, fill=gray!45]
        (1.00,0) -- (1.42,0.60) -- (1.42,0.28)
        -- (2.28,0.28) -- (2.28,0.60)
        -- (2.70,0) -- (2.28,-0.60)
        -- (2.28,-0.28) -- (1.42,-0.28)
        -- (1.42,-0.60) -- cycle;

    \node[lab] at (0,-2.0) {$A$};
    \node[lab] at (3.7,-2.0) {$B$};

    \node[sublab] at (1.85,-2.75) {Extremal Case $1$};
\end{scope}

\begin{scope}[xshift=7.8cm]
    \draw[line, fill=gray!45] (0,0) ellipse [x radius=\xrad, y radius=\yrad];
    \draw[line, fill=gray!45] (3,0) ellipse [x radius=\xrad, y radius=\yrad];

    \node[lab] at (0,-2.0) {$A$};
    \node[lab] at (3,-2.0) {$B$};

    \node[sublab] at (1.5,-2.75) {Extremal Case $2$};
\end{scope}

\end{tikzpicture}
\caption{Two extremal cases.}
\label{figs}
\end{figure}

\subsection{Lower bound construction}\label{sec:lower-bounds}

We give a construction showing that the additive term in the degree
condition must in general be linear in $h$.

\begin{proposition}\label{prop:linear-lower}
For every integer $c\ge1$, let $h=6c-2$ and
$H_c=(2c-1)K_3\cup K_2$. Then, for infinitely many $n$, there are
admissible prescribed lengths and an $n$-vertex graph $G$ with
$\delta(G)=\ceil{n/2}+c-1$
such that $G$ contains no spanning $H_c$-subdivision with those
prescribed lengths.
\end{proposition}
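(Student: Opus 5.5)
We plan to use a near-extremal construction of Extremal Case 1 type, with an independent set $A$ completely joined to its complement $B$, and to rule out the required subdivision by parity-plus-counting. We start from the structure of the target. Every spanning $H_c$-subdivision of $G$ is a spanning subgraph consisting of $2c-1$ vertex-disjoint cycles (one for each triangle) together with one path (the subdivided $K_2$). Every vertex has degree $2$ in it except the two ends of the path. The cycle replacing the $j$th triangle has length equal to the sum of the three prescribed lengths of its edges. Admissibility only requires all $\ell_i\ge4$ with $\sum_i\ell_i=n-|V(H_c)|+h=n-1$, since $|V(H_c)|=3(2c-1)+2=6c-1$ and $h=6c-2$. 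The small short-path condition of \cref{thm:main} is irrelevant here, because the proposition only asks for admissible lengths. We will therefore take all subdivision lengths of linear size, and only their parities will matter.

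\textbf{Construction.} Let $A$ be an independent set, let $B=V(G)\setminus A$, put all $A$--$B$ edges into $G$, and let $G[B]$ be $k$-regular. Vertices of $A$ have degree $|B|$ and vertices of $B$ have degree $|A|+k$. We choose $|A|$ as large as the degree bound $\ceil{n/2}+c-1$ permits and $k$ minimal, restricting $n$ to a residue class so that a $k$-regular graph on $|B|$ vertices exists. The key counting identity is the following. Let $F$ be any spanning subgraph as above, and let $e_A,e_B$ be the numbers of path endpoints lying in $A$ and in $B$. Summing degrees over $A$ and over $B$ gives
\[
e_F(B)=|B|-|A|+\tfrac12(e_A-e_B).
\]
Moreover, since $A$ is independent, each component of $F$ has length congruent modulo $2$ to the number of its $B$--$B$ edges. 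This holds for a cycle, and also for a path whose ends are both in $A$ or both in $B$; a path with exactly one end in $A$ picks up an extra parity shift. Hence every odd cycle uses at least one $B$--$B$ edge, and so does every odd $A$--$A$ path.

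\textbf{Choice of lengths and contradiction.} We prescribe the triangle lengths so that all $2c-1$ cycles are odd, and we choose the parity of the path length so that it too is forced to consume a $B$--$B$ edge in every endpoint configuration. We then compare the forced lower bound on $e_F(B)$ with the identity above. There are three cases, namely $e_A=2$, $e_A=e_B=1$ and $e_B=2$, and each should give a contradiction. For example, with $n$ even, $|A|=n/2-c+1$ and $k=2c-2$, the identity gives $e_F(B)\le 2c-1$ with equality only when $e_A=2$. This already kills every configuration except an $A$--$A$ path carrying no $B$--$B$ edge.

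\textbf{Main obstacle.} The hard step is exactly this last configuration. The constraint $\sum\ell_i=n-1$ couples the parity of the path length with the parities of the cycles and with the parity of $n$, and the naive choice above leaves the even $A$--$A$ path alive. If this happens, we would first try the odd-$n$ version, with $|B|-|A|=2c-1$ and $k=2c-1$. If parity still leaves one slack unit, we would modify $G[B]$ so that $B$ itself splits into a small cut structure, or make one triangle's cycle even while tightening $|A|$ by one. The aim is that the number of $B$--$B$ edges available in $F$ is strictly less than the number of odd components. An alternative we would keep in reserve is the Extremal Case~2 analogue: two cliques of sizes $n/2\pm c$, with the smaller one completely joined to a set $T$ of $2c$ vertices in the larger one. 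One then chooses the piece sizes so that covering the smaller clique forces more than $|T|$ passages through $T$.
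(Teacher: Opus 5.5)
Your parity observation (with $A$ independent, every odd cycle contains an edge inside $B$) and your choice of odd triangle sums are right. But there is a genuine gap: the graph you actually build is not a counterexample, and the edge-counting invariant cannot produce one.

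Take your even-$n$ graph with $|A|=n/2-c+1$ and $G[B]$ $(2c-2)$-regular, $c\ge2$. Since $\sum_i\ell_i=n-1$ is odd and all $2c-1$ triangle cycles are odd, the path has even length. Here is an explicit embedding.
\begin{itemize}
\item Choose $2c-1$ pairwise disjoint edges of $G[B]$; a $(2c-2)$-regular graph has a matching of linear size.
\item Put one of these edges on each cycle, and alternate between $A$ and $B$ elsewhere on the cycle.
\item Route the path alternately, with both ends in $A$.
\end{itemize}
Each cycle has one more vertex in $B$ than in $A$, and the path has one more in $A$ than in $B$. So the totals match $|B|-|A|=2c-2$ exactly. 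Since every $A$--$B$ edge is present, this is a spanning $H_c$-subdivision with the prescribed lengths. The configuration you flagged as the main obstacle is therefore realizable.

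Your fallbacks fail for the same reason. Let $o$ be the number of odd triangle cycles and $\pi\in\{0,1\}$ the parity of the path length. To rule out both path ends lying in $A$ via your identity, you need $o+\pi\ge e_F(B)+1=|B|-|A|+2$. But $|B|\ge\delta(G)$ gives $|B|-|A|\ge 2c-2$ for even $n$ and $|B|-|A|\ge 2c-1$ for odd $n$. Meanwhile $o+\pi\le 2c$ and $o+\pi\equiv n-1\pmod 2$. So the inequality fails for every parity pattern and every $n$, and shrinking $A$ only makes it worse.

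The two-clique reserve, as described, also gives no obstruction. With linear lengths, a single component can enter and leave the smaller clique through two vertices of $T$ and cover exactly the vertices left over by whole components.

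The missing idea is that the obstruction must come from the matching number of $G[B]$, not from the number of $B$--$B$ edges in $F$. The identity is identical in the correct construction and still permits $e_F(B)=2c-1$, so it cannot be what excludes $F$.

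The paper keeps your sizes: $n=2m$, $|A|=m-c+1$, $|B|=m+c-1$, with $A$ independent and completely joined to $B$. The difference is that $G[B]$ consists of all edges of $B$ meeting a fixed set $S\subseteq B$ of order $2c-2$. Then:
\begin{itemize}
\item vertices of $B\setminus S$ have degree $|A|+|S|=m+c-1$, exactly as in your regular construction;
\item vertices of $A$ have degree $|B|$ and vertices of $S$ have degree $n-1$, so $\delta(G)=m+c-1$;
\item $S$ is now a vertex cover of $G[B]$.
\end{itemize}
Choose linear lengths so that every triangle has odd total length. Each of the $2c-1$ vertex-disjoint odd cycles then contains an edge of $G[B]$. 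Choosing one such edge from each cycle gives a matching of size $2c-1$ in $G[B]$, contradicting the cover of order $2c-2$. The subdivided $K_2$ plays no role, and no case analysis over its endpoints is needed.
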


\begin{proof}
Let $n=2m$ be sufficiently large. Since $e(H_c)=h=6c-2$ and $|V(H_c)|=6c-1$, the prescribed lengths must sum to $L=n-|V(H_c)|+h=n-1$. Choose one edge from each of the $2c-1$ triangles of $H_c$. Set
$$
q=\floor{\frac{L-(2c-1)}{2h}}
\quad\text{and}\quad
R=L-2qh-(2c-1).
$$
Then $R$ is even and $0\le R<2h$. Initially assign length $2q+1$
to the chosen edge of each triangle and length $2q$ to every other
edge of $H_c$. Finally, add $2$ to the lengths of any $R/2$ edges.
The resulting lengths sum to $L$, and the three lengths on each
triangle have odd sum. Moreover, for sufficiently large $n$, all
prescribed lengths are at least $\alpha n$.

Let $|A|=m-c+1$ and $|B|=m+c-1$, and choose
$S\subseteq B$ with $|S|=2c-2$. Define $G$ by taking all edges
between $A$ and $B$, no edges inside $A$, and precisely those edges
of $G[B]$ having at least one endpoint in $S$. Then
$d_G(a)=m+c-1$ for every $a\in A$,
$d_G(b)=m+c-1$ for every $b\in B\setminus S$, and every vertex of
$S$ has degree $n-1$. Hence
$\delta(G)=m+c-1=\ceil{n/2}+c-1$.

Suppose that $G$ contains a spanning $H_c$-subdivision with the
prescribed lengths. Each triangle of $H_c$ is then replaced by an odd
cycle. Every cycle contains an even number of edges between $A$ and
$B$, while $A$ is independent. Hence each of these $2c-1$
vertex-disjoint odd cycles contains an edge of $G[B]$. Choosing one
such edge from each cycle gives a matching of size $2c-1$ in $G[B]$,
contradicting the fact that $S$ is a vertex cover of $G[B]$ of order
$2c-2$.
\end{proof}

Since $h=6c-2$, the construction shows that no additive term $o(h)$
can guarantee spanning $H$-subdivisions with all subdivision path
lengths prescribed in advance, uniformly over all $h$-edge graphs $H$.


\section{The non-extremal case}\label{sc3}
Choose constants such that
$$
0<\frac1{n_0}\ll\xi\ll\beta\ll\alpha_0\ll\gamma
\ll\eta_0^2\ll\eta_0\ll\varepsilon\ll\alpha^2\ll\frac1h.
$$
Throughout this section, let $G$ be an $n$-vertex $\varepsilon$-non-extremal graph with $\delta(G)\ge n/2$, and let $H$ be a graph with $h\ge3$ edges and no isolated vertices.

\subsection{Path-covering lemma}\label{subsec:covering}
 The following lemma ensures that every sufficiently large non-extremal
graph with minimum degree close to half of its order contains a Hamilton
cycle, and hence a Hamilton path. This will be used to cover all
vertices left outside the absorbing structure.
\begin{lemma}[\cite{Khan}, Path-covering Lemma]\label{Almost-covering}
For any constant $\eta_0$ with $0<\eta_0\ll\varepsilon$ there exists a constant $t_0$ such that if $T$ is an $\varepsilon$-non-extremal graph on $t\geq t_0$ vertices with $\delta(T)\geq(1/2-\eta_0)t$, then $T$ is Hamiltonian.
\end{lemma}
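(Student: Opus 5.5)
This lemma is quoted from~\cite{Khan}, so within the paper it is simply invoked. If I had to prove it, I would use the standard stability route for Dirac-type Hamiltonicity: non-extremality together with $\delta(T)\ge(1/2-\eta_0)t$ should force the robust connectivity that lets absorption or regularity finish the job. I would work with the hierarchy $\eta_0\ll\varepsilon$ and take $t_0$ large.

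\emph{Step 1: robust expansion and connectivity.} I would first show that $T$ is a robust expander. Suppose some set $X$ with $\varepsilon' t\le|X|\le(1-\varepsilon')t$ has a small robust neighbourhood. The minimum degree bound then forces $|X|\approx t/2$. Moreover, either $X$ is nearly independent, which is Extremal Case~1, or $X$ has few edges to its complement, which is Extremal Case~2 (after swapping $X$ with its complement if necessary so that $|X|\le t/2$). This contradicts $\varepsilon$-non-extremality. Along the way I would show that any two vertices are joined by many short paths of length at most some constant. This follows because the neighbourhoods $N(u)$ and $N(v)$ have size about $t/2$, so they either intersect substantially or span many edges between them; if neither held, one of the extremal configurations would appear.

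\emph{Step 2: absorbing path.} Next I would build an absorbing path $P_{\mathrm{abs}}$ of size at most $\gamma t$. For each vertex $v$, there should be $\Omega(t^2)$ edges $xy$ with $x,y\in N(v)$, so that $v$ can be inserted between $x$ and $y$. When $N(v)$ is sparse, I would instead use longer absorbers whose existence follows from the many-short-paths property. A random selection of such absorbers, connected into one path using Step~1, absorbs any small leftover set.

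\emph{Step 3: almost cover and closing.} After setting aside a small random reservoir for connections, I would apply the regularity lemma to $T-V(P_{\mathrm{abs}})$. The reduced graph inherits minimum degree about $(1/2-2\eta_0)k$ and non-extremality. It therefore contains an almost perfect matching that is connected, or a connected fractional matching, and this yields a collection of few paths covering all but $\gamma^2 t$ vertices. I would link these paths and $P_{\mathrm{abs}}$ into a cycle through the reservoir, and then absorb the leftover vertices.

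I expect the main obstacle to be the near-threshold regime where $\delta(T)$ is slightly below $t/2$. There, Dirac's theorem cannot be used directly, and non-extremality must quantitatively rule out both the "independent half" and the "disconnected halves" obstructions. So Step~1 has to be done carefully, keeping the dependence $\eta_0\ll\varepsilon$ explicit. In the paper itself, all of this is bypassed by citing~\cite{Khan}.
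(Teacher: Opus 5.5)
The paper gives no proof of this lemma; it imports it from Khan's thesis as a black box. Your sketch is therefore a genuinely different, self-contained route. It is a sound one: the standard stability-plus-absorption argument.

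Your Steps~1 and~2 largely coincide with arguments the paper already runs in \cref{sc3} for other purposes. The case analysis in \cref{lem:many-absorbers} and \cref{clm:connect} shows that two neighbourhoods of order about $t/2$ either meet substantially or span many edges between them, unless an extremal configuration appears. That is exactly your many-short-paths property.

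Two simplifications are available:
\begin{itemize}
\item The fallback to longer absorbers in Step~2 never occurs. Since $|N(v)|\ge(1/2-\eta_0)t\ge(1/2-\varepsilon)t$, the inequality $e(N(v))\le\varepsilon t^2$ would be Extremal Case~1. So every vertex has more than $\varepsilon t^2$ edges inside its neighbourhood.
\item Once Step~1 shows that $T$ is a robust $(\nu,\tau)$-expander, the theorem of K\"uhn, Osthus and Treglown (robust expanders of linear minimum degree are Hamiltonian) finishes the proof. Steps~2 and~3 then become unnecessary.
\end{itemize}

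If you keep Step~3, you should justify the near-perfect matching of the reduced graph $R$ by Tutte--Berge. Suppose $\mathrm{odd}(R-S)\ge|S|+2$.
\begin{itemize}
\item The range $1\le|S|<\delta(R)$ is impossible by counting, because every component of $R-S$ then has at least $\delta(R)-|S|+1$ vertices.
\item If $S=\emptyset$, then $R$ is disconnected with two components of order about $k/2$, which is Extremal Case~2.
\item If $|S|\ge\delta(R)$, there are at least $(1/2-O(\eta_0))k$ singleton components. They form an independent set, which is Extremal Case~1.
\end{itemize}
Connectivity of the matching is irrelevant, since you link through the reservoir. However, the regularity parameter and the reservoir size must be chosen below the absorbing capacity, so that everything left over can be absorbed.

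Compared with the citation, your route costs length. In return it verifies the lemma for precisely this paper's definition of $\varepsilon$-non-extremality, including the constraint $|A|\le t/2$ in Case~2. That matters because such definitions vary between sources.
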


\subsection{Absorbing lemma}\label{subsec:absorbing}
We construct a small $H$-subdivision whose long subdivision paths contain
many local absorbers. We first find many absorbers for every ordered pair,
then select a small disjoint family and distribute it among the long
subdivision paths. The resulting structure will absorb consecutive segments
of a Hamilton path in the final step.
\begin{lemma}\label{lem:prob}
The following standard estimates hold.
\begin{enumerate}[label=\textnormal{(\arabic*)}, ref=\thelemma(\arabic*)]
    \item \label[lemma]{lem321} If $X$ is a binomial random variable and
    $0<a<3/2$, then
    \[
    \mathbb P\bigl(|X-\mathbb EX|>a\,\mathbb EX\bigr)
    <2e^{-\frac{a^2}{3}\mathbb EX}.
    \]
    \item \label[lemma]{lem322} If $Y$ is a non-negative random variable and
    $b>0$, then
    \[
    \mathbb P(Y\ge b)\le \frac{\mathbb EY}{b}.
    \]
\end{enumerate}
\end{lemma}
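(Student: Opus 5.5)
Both parts of \cref{lem:prob} are standard, and I would prove them directly rather than cite anything from the paper: part (1) via the exponential moment method (Chernoff bounds), and part (2) as Markov's inequality.

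For (2), I would use the pointwise inequality $b\,\1_{\{Y\ge b\}}\le Y$, which holds because $Y\ge0$. Taking expectations gives $b\,\mathbb P(Y\ge b)\le\mathbb EY$, and dividing by $b>0$ finishes it.

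For (1), write $X\sim\mathrm{Bin}(N,p)$ and $\mu=\mathbb EX=Np$. I would bound the upper and lower tails separately and take a union bound, which accounts for the factor $2$.

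\textbf{Upper tail.} For $\lambda>0$, Markov applied to $e^{\lambda X}$ together with independence and $1+x\le e^x$ gives
\[
\mathbb P(X\ge(1+a)\mu)\le e^{-\lambda(1+a)\mu}\prod(1+p(e^\lambda-1))\le \exp\bigl(\mu(e^\lambda-1)-\lambda(1+a)\mu\bigr).
\]
Choosing $\lambda=\ln(1+a)$ yields $\bigl(e^{a}/(1+a)^{1+a}\bigr)^\mu$.

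\textbf{Lower tail.} The same computation with $\lambda<0$ and the choice $\lambda=\ln(1-a)$ gives $\bigl(e^{-a}/(1-a)^{1-a}\bigr)^\mu$ when $a<1$. For $1\le a<3/2$, the event $X<(1-a)\mu\le0$ is empty, so the lower tail probability is $0$.

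The remaining step, and the only real obstacle, is the elementary calculus showing that each exponent is at most $-a^2\mu/3$ on the relevant range.

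\textbf{Upper exponent.} I need $f(a)=(1+a)\ln(1+a)-a-a^2/3\ge0$ for $0<a<3/2$. Here $f(0)=0$ and $f'(a)=\ln(1+a)-2a/3$. Since $f'(0)=0$ and $f'$ is concave, $f'$ is positive until its unique root $a^*\approx1.5$, where $\ln 2.5\approx0.916>2\cdot1.5/3\cdot0.91$. If $f'$ changes sign inside the interval, I would verify $f(3/2)=2.5\ln2.5-1.5-0.75\approx0.04>0$ directly, so $f\ge0$ throughout.

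\textbf{Lower exponent.} The known bound is $(1-a)\ln(1-a)+a\ge a^2/2\ge a^2/3$, which follows from the Taylor series $\sum_{k\ge2}a^k/(k(k-1))$.

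Strictness is a minor issue. The statement has $>$ in the event and $<$ in the bound, and this is handled because the inequalities $f>0$ for $a>0$ are strict.
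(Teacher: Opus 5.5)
Your proof is correct and is the standard textbook derivation (exponential-moment Chernoff bounds for the two tails, then Markov's inequality) of exactly the facts the paper quotes as standard without proof. One numerical slip should be fixed. The positive root of $f'(a)=\ln(1+a)-2a/3$ is about $1.14$, not $1.5$; indeed $f'(3/2)=\ln\tfrac52-1<0$. So $f'$ does change sign inside the interval, and what actually gives $f>0$ on $(0,3/2]$ is your endpoint check $f(3/2)=\tfrac52\ln\tfrac52-\tfrac94\approx0.04>0$, together with $f$ being increasing and then decreasing.
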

Before introducing the absorbing structures, we set up a convenient notation for concatenating paths. If two paths $P=x\cdots y$ and $Q=y\cdots z$ satisfy
$V(P)\cap V(Q)=\{y\}$, then we write $P\circ Q$ for their concatenation.

\begin{definition}\label{def:absorber}

Let $u,v\in V(G)$ be an ordered pair of vertices, not necessarily distinct. An ordered $4$-tuple $z_1z_2z_3z_4$ in $G$ is called an \emph{absorber} for $(u,v)$ if $z_1z_2z_3z_4$ forms a path, $uz_2,\ vz_3\in E(G)$ and $\{z_1,z_2,z_3,z_4\}\cap\{u,v\}=\emptyset$. In this case, we also say that $z_1z_2z_3z_4$ \emph{absorbs} $(u,v)$. For each ordered pair $u,v\in V(G)$, let $\mathcal A_{uv}$ denote the family of all absorbers for $(u,v)$.
\end{definition}

In order for the probabilistic absorbing method to succeed, the next lemma guarantees that the graph $G$ contains a rich abundance of absorbers for every pair of vertices.

\begin{lemma}\label{lem:many-absorbers}
Let $G$ be an $n$-vertex graph with $\delta(G)\ge (1/2-\eta_0)n$ which is
$\varepsilon$-non-extremal.
Then for every ordered pair of vertices $u,v\in V(G)$, we have $|\mathcal A_{uv}|\ge \gamma n^4$.
\end{lemma}
\begin{proof}
Fix a ordered pair $u,v\in V(G)$, and let $X:=N(u)$ and $Y:=N(v)$.
Write $A:=X\setminus Y$, $B:=Y\setminus X$, $C:=X\cap Y$, and
$R:=V(G)\setminus (X\cup Y)$. Since $\delta(G)\ge (1/2-\eta_0)n$, we have
$|X|,|Y|\ge (1/2-\eta_0)n$, and hence $|R|=n-|X\cup Y|\le |C|+2\eta_0 n$.
We shall show that there are at least $cn^2$ edges with one end in $X$ and
the other in $Y$, where $c>0$ depends only on $\varepsilon$ and $\eta_0$.
We divide into three cases.

If $|C|\ge (1/2-\varepsilon)n$, since $G$ is $\varepsilon$-non-extremal, $G$ does not satisfy Extremal Case 1, so we must have $e(C)>\varepsilon n^2$.
Every edge of $G[C]$ has both ends in $X\cap Y$, thus there are at least
$\varepsilon n^2$ such middle edges.

Next assume that $4\eta_0 n\le |C|< (1/2-\varepsilon)n$. For every
$x\in C$, we have
$d(x,X\cup Y)\ge d(x)-|R|\ge (1/2-\eta_0)n-(|C|+2\eta_0 n)
=(1/2-3\eta_0)n-|C|$.
Since $|C|<(1/2-\varepsilon)n$ and $\eta_0\ll\varepsilon$, it follows that
$d(x,X\cup Y)\ge \varepsilon n/2$. Summing over all $x\in C$, we obtain at
least $|C|\varepsilon n/2$ incidences between $C$ and $X\cup Y$, so there are
at least $|C|\varepsilon n/4\ge \varepsilon\eta_0 n^2$ edges with one end in
$C$ and the other in $X\cup Y$. Every such edge can be ordered as an edge $z_2z_3$ with $z_2\in X$ and $z_3\in Y$, so again we have at least $\varepsilon\eta_0 n^2$ middle edges.

Finally, assume that $|C|<4\eta_0 n$. Then
$|A|=|X|-|C|\ge (1/2-\eta_0)n-4\eta_0 n=(1/2-5\eta_0)n$, and similarly
$|B|\ge (1/2-5\eta_0)n$. Let $S$ be the smaller of $A$ and $B$, and let
$T$ be the other one. Then $(1/2-5\eta_0)n\le |S|\le n/2$. Since $\eta_0\ll\varepsilon$ and $G$ is $\varepsilon$-non-extremal, $G$ does not
satisfy Extremal Case 2, and thus $e(S,V(G)\setminus S)>\varepsilon n^2$.
Moreover, $|C|+|R|\le 4\eta_0 n+(|C|+2\eta_0 n)<10\eta_0 n$, so $e(S,C\cup R)\leq 10\eta_0 n^2$. Hence
$e(S,T)\ge e(S,V(G)\setminus S)-e(S,C\cup R)\ge \varepsilon n^2-10\eta_0 n^2
\ge \varepsilon n^2/2$. Every edge between $S$ and $T$ is again an
edge with one end in $X$ and the other in $Y$.

Therefore, in all cases, there are at least $c n^2$ edges $z_2z_3$ with
$z_2\in X$ and $z_3\in Y$, where we may take $c:=\varepsilon\eta_0/2$.  Discard
the at most $2n$ such edges incident with $u$ or $v$, at least $c n^2/2$ edges remain.
For each remaining edge $z_2z_3$, choose $z_1\in N(z_2)\setminus\{u,v,z_3\}$ and then $z_4\in N(z_3)\setminus\{u,v,z_1,z_2\}$.
Since $\delta(G)\ge (1/2-\eta_0)n$, the number of choices for $(z_1,z_4)$ is at
least
$$
\bigl((1/2-\eta_0)n-3\bigr)\bigl((1/2-\eta_0)n-4\bigr)\ge \frac{n^2}{10}.
$$
Each resulting path $z_1z_2z_3z_4$ is an absorber
for $(u,v)$, hence $|\mathcal A_{uv}|\ge ((\varepsilon\eta_0/2)n^2/2)\cdot n^2/10\ge \gamma n^4$.
\end{proof}

The next lemma shows that one can select a small pairwise vertex-disjoint family of absorbers such that every pair still has many absorbers inside this family.

\begin{lemma}\label{lem:absorbing-family}
Let $G$ be an $n$-vertex graph with $\delta(G)\ge(1/2-\eta_0)n$ which is
$\varepsilon$-non-extremal.
Then there exists a family $\mathcal F$ of at most $\xi n$
pairwise vertex-disjoint absorbers in $G$ such that for every ordered pair $u,v\in V(G)$, $|\mathcal A_{uv}\cap \mathcal F|\ge \xi^2 n$.
\end{lemma}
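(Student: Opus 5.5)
We follow the standard probabilistic selection argument, using \cref{lem:many-absorbers} together with the estimates in \cref{lem:prob}. Let $\mathcal A$ be the family of all ordered $4$-tuples $z_1z_2z_3z_4$ that form paths in $G$; clearly $|\mathcal A|\le n^4$. We form a random subfamily $\mathcal F'$ by including each tuple of $\mathcal A$ independently with probability $p:=\xi n^{-3}/2$.

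First, we control the size of $\mathcal F'$. Since $\mathbb E|\mathcal F'|\le pn^4=\xi n/2$, \cref{lem321} (or \cref{lem322}) shows that with probability at least $3/4$ we have $|\mathcal F'|\le \xi n$, once $n$ is large enough.

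Second, we show that every pair keeps many absorbers. For each ordered pair $(u,v)$, the quantity $|\mathcal A_{uv}\cap\mathcal F'|$ is binomial with mean at least $p\gamma n^4=\gamma\xi n/2$. By \cref{lem321} with $a=1/2$, the probability that it falls below $\gamma\xi n/4$ is at most $2e^{-\gamma\xi n/24}$. A union bound over the at most $n^2$ ordered pairs shows that, with probability $1-o(1)$, every pair satisfies $|\mathcal A_{uv}\cap\mathcal F'|\ge\gamma\xi n/4$.

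Third, we control overlaps. Call two distinct tuples in $\mathcal F'$ an overlapping pair if they share a vertex; the same term also covers a tuple that repeats a vertex, but tuples in $\mathcal A$ are paths, so their four vertices are already distinct. The number of ordered pairs of tuples sharing at least one vertex is at most $16n^7$. Hence the expected number of overlapping pairs in $\mathcal F'$ is at most $16n^7p^2=4\xi^2 n$. By \cref{lem322}, with probability at least $1/2$ there are at most $8\xi^2 n$ overlapping pairs.

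Combining the three events, some outcome $\mathcal F'$ satisfies all three simultaneously. We now delete one tuple from each overlapping pair, and we also delete every tuple that does not absorb any pair; the latter deletion is harmless and optional. Call the result $\mathcal F$. It consists of pairwise vertex-disjoint absorbers and satisfies $|\mathcal F|\le\xi n$. For each ordered pair $(u,v)$ it also satisfies
\[
|\mathcal A_{uv}\cap\mathcal F|\ \ge\ \gamma\xi n/4-8\xi^2 n\ \ge\ \xi^2 n,
\]
because $\xi\ll\gamma$ gives $\gamma\xi/4\ge 9\xi^2$.

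The main obstacle is the choice of $p$, which must balance two competing requirements. The family must be sparse, of size $O(\xi n)$, so that the overlap count $O(p^2n^7)$ is of lower order than the per-pair count $p\gamma n^4$. Taking $p=\Theta(\xi n^{-3})$ makes the per-pair count $\Theta(\gamma\xi n)$ and the overlap count $\Theta(\xi^2 n)$, and it is exactly the hierarchy $\xi\ll\gamma$ that lets the deletions be absorbed. Two further points need only a brief check. First, absorbers are defined to avoid $u$ and $v$, so when $u=v$ we still have the count from \cref{lem:many-absorbers}. Second, the union bound is valid because $2n^2e^{-\gamma\xi n/24}=o(1)$.
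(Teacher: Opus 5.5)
Your proposal is correct and follows essentially the same route as the paper: you sample ordered $4$-tuples with probability $p=\Theta(\xi n^{-3})$, apply Chernoff to $|\mathcal F'|$ and to each $|\mathcal A_{uv}\cap\mathcal F'|$, apply Markov to the overlap count, and then delete. The only differences are that you delete one tuple per overlapping pair where the paper deletes every intersecting tuple, and you sample only paths where the paper discards non-absorbers afterwards; both change only constants. One small caveat: the parenthetical use of \cref{lem322} for the size bound gives only probability $1/2$, and together with the probability-$1/2$ overlap event this leaves no room in the union bound. So the Chernoff route is the one you need there, and it is valid since $\mathbb E|\mathcal F'|\ge p\gamma n^4\to\infty$.
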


\begin{proof}
Set $p=\frac{\xi}{1000}n^{-3}$, and let $\mathcal F'$ be a random family of ordered $4$-tuples of distinct vertices, obtained by including each ordered $4$-tuple independently with probability $p$. First, for sufficiently large $n$,
\[
 \frac{\xi n}{2000}\le \mathbb E|\mathcal F'|
 =p n(n-1)(n-2)(n-3)\le \frac{\xi n}{1000}.
\]
Since $2\mathbb E|\mathcal F'|\le \xi n$, Chernoff's inequality in \cref{lem321} gives
\begin{equation}\label{eq1}
 \mathbb P(|\mathcal F'|\ge \xi n)
 \le \mathbb P(|\mathcal F'|\ge2\mathbb E|\mathcal F'|)
 \le2e^{-\mathbb E|\mathcal F'|/3}=o(1).
\end{equation}
Next, fix a ordered pair $u,v\in V(G)$  and let $X_{uv}:=|\mathcal A_{uv}\cap \mathcal F'|$. By \cref{lem:many-absorbers}, we have $|\mathcal A_{uv}|\ge \gamma n^4$.
Thus $$\mathbb E X_{uv}=p|\mathcal A_{uv}|\ge \frac{\xi}{1000}n^{-3}\cdot \gamma n^4=\frac{\gamma\xi}{1000}n.$$
Since $\xi\ll\gamma$, we may choose the constants so that
$\gamma\xi/1000\ge 8\xi^2$.  Hence $\mathbb E X_{uv}\ge 8\xi^2 n$.
Another application of \cref{lem321} gives
$\mathbb P(X_{uv}\le \mathbb E X_{uv}/2)\le 2e^{-\mathbb E X_{uv}/12}$, and therefore $\mathbb P(X_{uv}\le 4\xi^2 n)\le 2e^{-2\xi^2 n/3}$. Since there are at most $n^2$ ordered pairs $(u,v)$, the union bound implies
that, with probability $1-o(1)$,
\begin{equation}\label{eq2}
    |\mathcal A_{uv}\cap \mathcal F'|\ge 4\xi^2 n
\end{equation}
for every pair $u,v\in V(G)$.

Let $Y$ be the number of unordered  intersecting pairs of ordered $4$-tuples in
$\mathcal F'$.  A given ordered $4$-tuple intersects at most $64n^3$ other ordered $4$-tuples. Hence $\mathbb EY\le \frac12 n^4\cdot64n^3\cdot p^2<\xi^2n/30000$.
By \cref{lem322}, 
\begin{equation}\label{eq3}
   \mathbb P(Y\ge \xi^2 n/2)\le \frac{\mathbb E Y}{\xi^2 n/2}<1/10000.
\end{equation}

Therefore, combining \eqref{eq1}, \eqref{eq2}, and \eqref{eq3} for all sufficiently large $n$, there exists a choice of $\mathcal F'$ such that the following three properties hold
simultaneously:
$$ |\mathcal F'| < \xi n,\quad  Y \le \frac{\xi^2 n}{2}, \quad \text{and} \quad |\mathcal A_{uv}\cap \mathcal F'| \ge 4\xi^2 n \text{ for every ordered pair } u,v.$$
Fix such a family $\mathcal F'$. From $\mathcal F'$ delete every ordered $4$-tuple that is not an absorber, and also delete every ordered $4$-tuple that intersects another member of $\mathcal F'$. Let $\mathcal F$ be the remaining family. Then $\mathcal F$ consists of pairwise vertex-disjoint absorbers and $|\mathcal F|\le |\mathcal F'|<\xi n$.

It remains to verify the absorption property. For a ordered pair $u,v\in V(G)$, we have $|\mathcal A_{uv}\cap\mathcal F'| \ge 4\xi^2 n$. Since each intersecting pair accounts for at most two such absorbers and $Y\le \xi^2 n/2$, at most $2Y\le \xi^2 n$ absorbers of $\mathcal A_{uv}\cap\mathcal F'$ are removed. Consequently, $|\mathcal A_{uv}\cap\mathcal F|\ge 4\xi^2 n-\xi^2 n> \xi^2 n$.
This proves the lemma.
\end{proof}

We next partition the family $\mathcal F$ into several groups and connect the
absorbers within each group into a path. 

\begin{lemma}\label{lem:partition-linking}
Let $G$ be a graph on $n$ vertices with $\delta(G)\ge (1/2-\eta_0)n$ which is $\varepsilon$-non-extremal, and let $\mathcal F$ be the family obtained in \cref{lem:absorbing-family}, and let $f:=|\mathcal F|$. Let $t\ge2$, and let $\tau$ be a real number with $\xi^2<\tau<1-\xi^2$. Suppose that $f=f_1+\cdots+f_t$ and $\tau f<f_i<(1-\tau)f$ for every $i\in[t]$. There exists a partition $\mathcal F=\mathcal F_1\cup\cdots\cup \mathcal F_t$ with $|\mathcal F_i|=f_i$ for every $i\in [t]$ such that
\begin{enumerate}[label=\textnormal{(\roman*)}, ref=(\roman*)]
    \item \label{lem361} for every ordered pair $u,v\in V(G)\setminus V(\mathcal F)$ and every $i\in [t]$, the family $\mathcal F_i$ contains at least one absorber for $(u,v)$;

    \item \label{lem362} writing $\mathcal F_i=\{F_{i,1},\dots,F_{i,f_i}\}$, there exists a path $L_i$ in $G$ of the form $L_i= F_{i,1}\circ P_{i,1}\circ F_{i,2}\circ \cdots
    \circ P_{i,f_i-1}\circ F_{i,f_i}$, where each $P_{i,j}$ has order at most $4$, and the paths $L_1,\dots,L_t$ are pairwise vertex-disjoint.
\end{enumerate}
\end{lemma}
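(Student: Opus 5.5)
Two things are needed: a partition of $\mathcal F$ with the right sizes such that each part keeps an absorber for every pair, and a way to link the absorbers of each part into a path.

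\emph{Partition.} Choose a uniformly random partition $\mathcal F=\mathcal F_1\cup\cdots\cup\mathcal F_t$ with $|\mathcal F_i|=f_i$; for instance, take a uniformly random ordering of $\mathcal F$ and cut it into consecutive blocks of sizes $f_1,\dots,f_t$. Fix an ordered pair $u,v\in V(G)\setminus V(\mathcal F)$ and an index $i\in[t]$. By \cref{lem:absorbing-family}, $\mathcal F$ contains a subfamily $\mathcal A'_{uv}$ of exactly $m:=\lceil\xi^2 n\rceil$ absorbers for $(u,v)$. The bad event is $\mathcal A'_{uv}\cap\mathcal F_i=\emptyset$, and its probability is at most
\[
\binom{f-m}{f_i}\Big/\binom{f}{f_i}\le\Bigl(1-\frac{f_i}{f}\Bigr)^{m}\le(1-\tau)^{\xi^2 n}\le e^{-\tau\xi^2 n}.
\]
Using $\tau>\xi^2$, this is at most $e^{-\xi^4 n}$. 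A union bound over at most $n^2$ pairs and $t\le 1/\tau<\xi^{-2}$ indices gives total failure probability $o(1)$, so some partition satisfies \ref{lem361}. Here I am using that $t$ is bounded because each $f_i>\tau f$; if $f$ is small this needs $f\ge\xi^2n$, which holds since $|\mathcal F|\ge|\mathcal A_{uv}\cap\mathcal F|\ge\xi^2 n$.

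\emph{Linking.} Fix such a partition and order each $\mathcal F_i$ arbitrarily. Connect consecutive absorbers greedily, joining the last vertex $z_4$ of $F_{i,j}$ to the first vertex $z_1$ of $F_{i,j+1}$. The greedy step works as follows.

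1. Let $U$ be the set of vertices already used: $V(\mathcal F)$ together with the interiors of the connecting paths built so far. Then $|U|\le 4\xi n+2\xi n\le 6\xi n$, since there are fewer than $\xi n$ connections and each has at most $2$ interior vertices.
2. Let $W:=V(G)\setminus U$, so that $G[W]$ has minimum degree at least $(1/2-\eta_0-6\xi)n$.
3. For vertices $x,y$ to be joined, set $X:=N(x)\cap W$ and $Y:=N(y)\cap W$, each of size at least $(1/2-2\eta_0)n$.
4. If $X\cap Y\ne\emptyset$, pick $w\in X\cap Y$ and use $xwy$, a path of order $3$. Otherwise we need an edge between $X$ and $Y$, giving the path $xw_1w_2y$ of order $4$.
5. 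To find that edge, rerun the case analysis of \cref{lem:many-absorbers} with $X,Y$ in place of $N(u),N(v)$: when $X\cap Y=\emptyset$ we are in the case $|C|<4\eta_0 n$. Since $|X|,|Y|\ge(1/2-2\eta_0)n$, the set $R$ of vertices outside $X\cup Y$ has size $O(\eta_0 n)$. Failure of Extremal Case 2 for the smaller of $X,Y$ then yields at least $\varepsilon n^2/2$ edges between $X$ and $Y$.

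In fact the same argument gives $\Omega(n^2)$ such paths, so the deletions never matter.

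The main obstacle I anticipate is this linking step. The absorber endpoints are fixed and non-extremality is only a global property, so I must check carefully that the lemma-type counting survives the removal of $U$: the $O(\xi n)$ removed vertices cost only $O(\xi n^2)$ edges, which is negligible against $\varepsilon n^2$. I also need disjointness of $L_1,\dots,L_t$, which follows because all linkings are done sequentially within one global used set $U$.
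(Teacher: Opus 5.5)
Your proposal is correct and takes essentially the paper's route. For (i) you use a uniformly random partition with the prescribed part sizes and a union bound. For (ii) you link greedily through a common neighbour or an edge between the two trimmed neighbourhoods, with the failure of Extremal Case~2 for the smaller neighbourhood supplying that edge.

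The only difference is in (i). You bound the bad event directly by $\binom{f-m}{f_i}/\binom{f}{f_i}\le(1-\tau)^{m}$, whereas the paper applies its Chernoff bound to the number of absorbers in $\mathcal F_i$. Your version is slightly cleaner, because that count is hypergeometric rather than binomial.
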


\begin{proof}
 We first prove \ref{lem361}. Choose uniformly at random a partition $\mathcal F=\mathcal F_1\cup\cdots\cup\mathcal F_t$ with $|\mathcal F_i|=f_i$ for every $i\in[t]$. Fix a ordered pair $u,v\in V(G)\setminus V(\mathcal F)$ and an index $i\in [t]$, and let $Z_{uv}^i$ denote the number of absorbers for $(u,v)$ that lie in $\mathcal F_i$. Then we have
$$\mathbb E Z_{uv}^i=\frac{|\mathcal A_{uv}\cap \mathcal F|}{|\mathcal F|}\,f_i.$$
By \cref{lem:absorbing-family}, we have $|\mathcal A_{uv}\cap \mathcal F|\ge \xi^2 n$ and $|\mathcal F|\le \xi n$ and hence $\mathbb E Z_{uv}^i\ge \xi f_i$.  On the other hand, $f_i>\tau f=\tau |\mathcal F|$, where $\tau>\xi^2$, and moreover $f=|\mathcal F|\ge |\mathcal A_{uv}\cap \mathcal F|\ge \xi^2 n$. Therefore $\mathbb E Z_{uv}^i> \xi^5 n$. 

By \cref{lem321}, 
$$\mathbb P\!\left(Z_{uv}^i\le \frac{\mathbb E Z_{uv}^i}{2}\right)
\le 2e^{-\mathbb E Z_{uv}^i/12}
\le 2e^{-\xi^5 n/12}.$$
Since there are at most $n^2$ ordered pairs $(u,v)$ and $t<1/\tau<1/\xi^2$, 
the union bound implies that with positive probability, every $\mathcal F_i$ contains at least one absorber for every ordered pair $u,v\in V(G)\setminus V(\mathcal F)$ and every $i\in[t]$, this implies $Z_{uv}^i\ge1$.  Fix such a partition. This proves \ref{lem361}.

Before proving \ref{lem362}, we first show the next claim, which allows us to connect two distinct vertices after avoiding a set of vertices.

\begin{claim}\label{clm:connect}
Let $W\subseteq V(G)$ with $|W|\le \gamma n$, and let $x,y\in V(G)\setminus W$ be distinct vertices. Then there exists an $x,y$-path of order at most $4$ in $G-W$.
\end{claim}

\begin{proof}[Proof of \cref{clm:connect}]\renewcommand*{\qedsymbol}{$\blacksquare$}
Set $G':=G-W$. We may assume that $xy\notin E(G')$. Let $X:=N_{G'}(x)$ and $Y:=N_{G'}(y)$. Since
$\delta(G)\ge (1/2-\eta_0)n$, we have
$|X|,|Y|\ge (1/2-\eta_0)n-|W|\ge (1/2-\eta_0-\gamma)n\ge (1/2-\varepsilon) n$.

If $X\cap Y\neq\emptyset$, then $x$ and $y$ have a common neighbor in $G'$, so there is an $x,y$-path of order $3$ in $G'$. Thus we may assume that $X\cap Y=\emptyset$. If $e_{G'}(X,Y)>0$, then an edge between $X$ and $Y$ yields an $x,y$-path of order $4$ in $G'$. Hence it remains to consider the case $e_{G'}(X,Y)=0$.

Let $R:=V(G')\setminus (X\cup Y)$. Since $X$ and $Y$ are disjoint and each has size at least $(1/2-\eta_0-\gamma)n$, we have $|R|\le (2\eta_0+2\gamma)n<\varepsilon n/2$. Let $A$ be the smaller of $X$ and $Y$. Then $n/2-\varepsilon n\le |A|\le n/2$. Moreover, $e_{G'}(A,V(G')\setminus A)=e_{G'}(A,R)\le |A||R|<\varepsilon n^2/2$. Furthermore, the number of edges between $A$ and $W$ in $G$ is bounded by $e_G(A, W) \le |A||W| \le \gamma n^2 < \varepsilon n^2/2$. Consequently, we have
$$e_G(A,V(G)\setminus A) \le e_{G'}(A,V(G')\setminus A)+e_G(A, W)< \varepsilon n^2.$$ This contradicts the assumption that $G$ is $\varepsilon$-non-extremal. Therefore $e_{G'}(X,Y)>0$, and the claim follows.
\end{proof}
We now construct the paths $L_1,\dots,L_t$ one by one. Fix $i\in [t]$, and write
$\mathcal F_i=\{F_{i,1},\dots,F_{i,f_i}\}$, where we view each absorber $F_{i,j}=(z_1,z_2,z_3,z_4)$ as the path $z_1z_2z_3z_4$, with initial vertex $z_1$ and terminal vertex $z_4$. We connect these absorbers in the given order. For each $j\in [f_i-1]$, let $b_{i,j}$ be the terminal vertex of $F_{i,j}$ and let $a_{i,j+1}$ be the initial vertex of $F_{i,j+1}$.

Suppose that for some $j\in [f_i-1]$ we have already constructed the path
$S_{i,j}:=F_{i,1}\circ P_{i,1}\circ F_{i,2}\circ \cdots \circ P_{i,j-1}\circ F_{i,j}$, where the previously chosen connecting paths are pairwise internally disjoint and avoid all absorbers not yet used. Let $W$ consist of all vertices in
$V(\mathcal F)\setminus \{b_{i,j},a_{i,j+1}\}$ together with all internal vertices of the connecting paths already chosen for $L_1,\dots,L_{i-1}$ and for
$P_{i,1},\dots,P_{i,j-1}$. Since $f=|\mathcal F|\le \xi n$, we have $|V(\mathcal F)|=4f\le 4\xi n$. Moreover, the total number of internal vertices used by all previously chosen connecting paths is at most $2f\le 2\xi n$. Hence $|W|\le 6f\le 6\xi n<\gamma n$.

Applying \cref{clm:connect} to $G-W$ with $x=b_{i,j}$ and $y=a_{i,j+1}$, we obtain a $b_{i,j},a_{i,j+1}$-path $P_{i,j}$ of order at most $4$  avoiding all previously used vertices. Therefore $S_{i,j+1}:=S_{i,j}\circ P_{i,j}\circ F_{i,j+1}$ is again a path. Repeating this procedure, we iteratively construct the path 
$$L_i = F_{i,1} \circ P_{i,1} \circ F_{i,2} \circ \cdots \circ P_{i,f_i-1} \circ F_{i,f_i}.$$
By construction, the paths $L_1,\dots,L_t$ are pairwise vertex-disjoint. This completes the proof of \ref{lem362}, and hence of the lemma.
\end{proof}

We are now in a position to prove the absorbing lemma by using \cref{lem:absorbing-family,lem:partition-linking}.

\begin{lemma}[Absorbing Lemma]\label{Absorbing}
Let $H$ be a graph with vertex set $V(H)=\{v_1,\ldots,v_s\}$, with $h$
edges and no isolated vertices. Let $G$ be an $n$-vertex $\varepsilon$-non-extremal graph with $n\ge n_0$ and $\delta(G)\ge n/2$. Suppose that $\ell_1,\ldots,\ell_h\ge3$ and that $\sum_{\ell_i<\alpha_0n}\ell_i\le\beta n$. Then $G$ contains an $H$-subdivision $H'$ with $|V(H')|\le\gamma n$ such
that the path replacing $e_i$ has length at most $\ell_i$ for every
$i\in[h]$. Moreover, it has length exactly $\ell_i$ whenever
$\ell_i<\alpha_0n$, and every image path with $\ell_i\ge\alpha_0n$
contains an absorber for every ordered pair in $V(G)\setminus V(H')$.
\end{lemma}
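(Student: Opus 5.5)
We build $H'$ greedily, with the long edges carrying the absorber paths from \cref{lem:partition-linking}. Split $[h]$ into the short indices $I_s=\{i:\ell_i<\alpha_0 n\}$ and the long indices $I_\ell=\{i:\ell_i\ge\alpha_0 n\}$. Since $\sum_i\ell_i=n-|V(H)|+h\ge n-2h$ while $\sum_{i\in I_s}\ell_i\le\beta n$, the set $I_\ell$ is nonempty. As $\delta(G)\ge n/2$, we have $\delta(G)\ge(1/2-\eta_0)n$, so \cref{lem:absorbing-family} applies and gives a family $\mathcal F$ with $f=|\mathcal F|\le\xi n$. Set $t=|I_\ell|$.

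If $t\ge2$, take $f_i=\lfloor f/t\rfloor$ or $\lceil f/t\rceil$. Since $t\le h$ and $\xi^2\ll 1/h$, the value $\tau=1/(2h)$ satisfies $\xi^2<\tau<1-\xi^2$ and $\tau f<f_i<(1-\tau)f$. Then \cref{lem:partition-linking} gives disjoint paths $L_i$, $i\in I_\ell$, each containing an absorber for every ordered pair outside $V(\mathcal F)$. If $t=1$, do not use the partition lemma. Instead, link all of $\mathcal F$ into one path $L$ by repeatedly applying \cref{clm:connect}; property \ref{lem361} then holds trivially from \cref{lem:absorbing-family}. In either case $|V(L_i)|\le 4f+2f\le 6\xi n$. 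Each $L_i$ starts at some $a_i$ and ends at some $b_i$, and its length is at most $6\xi n<\alpha_0 n/2\le\ell_i/2$.

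Next, embed the branch vertices and connect everything. Choose distinct vertices $\varphi(v_1),\dots,\varphi(v_s)$ outside $\bigcup V(L_i)$. For each long edge $e_i=v_av_b$, use \cref{clm:connect} to join $\varphi(v_a)$ to $a_i$ and $b_i$ to $\varphi(v_b)$ by paths of order at most $4$, avoiding the set $W$ of all vertices used so far. Throughout, $|W|\le 6\xi n+O(h)+\sum_{I_s}\ell_i\le 2\beta n<\gamma n$, so the claim always applies. The resulting image path has length at most $6\xi n+6<\ell_i$, as required.

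The hard part is the short edges $i\in I_s$, whose paths must have length exactly $\ell_i\ge3$. For these we would prove a strengthened version of \cref{clm:connect}: for distinct $x,y\notin W$ with $|W|\le\gamma n$ and any $3\le\ell\le\alpha_0 n$, there is an $x,y$-path of length exactly $\ell$ in $G-W$. The plan has three steps.
\begin{enumerate}
\item Greedily grow a path from $x$ of length $\ell-3$. This is possible because every vertex has at least $(1/2-\eta_0-\gamma)n-\alpha_0 n$ neighbours outside the forbidden set.
\item Let $x'$ be its endpoint. Find an $x',y$-path of length exactly $3$, that is, an edge between $N(x')$ and $N(y)$ avoiding the used vertices.
\item If $N(x')\cap N(y)$ is large and there is no such edge, derive a contradiction with non-extremality, exactly as in the proof of \cref{lem:many-absorbers}: either $N(x')\cap N(y)$ is large and then contains $\varepsilon n^2$ edges, or there are many $X$--$Y$ edges, or the common part is small and Extremal Case 2 would hold.
\end{enumerate}
The number of such $X$--$Y$ edges is at least $cn^2$, which is more than the $O(\alpha_0 n\cdot n)$ edges touching $W$ and the used vertices. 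So a path of length exactly $3$ exists, and with it a path of length exactly $\ell$.

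Each short path is embedded using this strengthened claim and added to $W$. Finally, the absorbers lie inside the long image paths, and $V(G)\setminus V(H')\subseteq V(G)\setminus V(\mathcal F)$, so each long path contains an absorber for every ordered pair outside $V(H')$. The total size is $|V(H')|\le 6\xi n+\beta n+O(h)\le\gamma n$.

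The main obstacle is this exact-length connection step for the short paths. The non-extremality counting must survive the removal of $\gamma n$ vertices, and the margin $\gamma\ll\varepsilon\eta_0$ is what should make it work.
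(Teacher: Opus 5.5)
Your proposal is correct and follows essentially the paper's proof. The absorbers from \cref{lem:absorbing-family} are split by \cref{lem:partition-linking} (or linked into one path when $t=1$) and attached to the branch images by \cref{clm:connect}. The short edges are embedded by an exact-length connection claim that is the paper's \cref{clm:short-path}, proved by the same non-extremality case analysis.

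One small correction: the lemma does not assume $\sum_i\ell_i=n-|V(H)|+h$, so you cannot use that identity inside it to conclude $t\ge1$. The paper instead treats $t=0$ separately by taking $\mathcal F=\emptyset$. Your short-path embedding already handles that case, since the absorber condition is then vacuous.
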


We first record a short path claim which will be used to construct the short
image paths with their prescribed lengths.

\begin{claim}\label{clm:short-path}
Let $W\subseteq V(G)$ with $|W|\le 2\gamma n$, let
$x,y\in V(G)\setminus W$ be distinct vertices, and let $\ell$ be an integer
with $3\le \ell\le \alpha_0 n$.  Then there exists an $x,y$-path of length
exactly $\ell$ in $G-W$.
\end{claim}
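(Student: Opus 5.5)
Rather than building the path in one go, we extend a path greedily from $x$ until exactly three edges remain, and then close it up to $y$ with a path of length exactly $3$. For that last step we reuse the edge-counting argument from the proof of \cref{lem:many-absorbers}.

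\textbf{Step 1 (greedy extension).} If $\ell=3$, set $P:=x$ and $z:=x$. Otherwise, build a path $P=x\cdots z$ of length exactly $\ell-3$ in $G-(W\cup\{y\})$ one vertex at a time. At each step the current endpoint has at least
$$
\delta(G)-|W|-|V(P)|-1\ge n/2-2\gamma n-\alpha_0 n-1>0
$$
neighbours outside $W\cup V(P)\cup\{y\}$, so the extension never gets stuck. In both cases $z\neq y$ and $|V(P)|\le\alpha_0 n$.

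\textbf{Step 2 (closing with length exactly $3$).} We need vertices $a,b\notin W\cup V(P)\cup\{y\}$ with $a\in N(z)$, $b\in N(y)$ and $ab\in E(G)$. Since such an edge has $a\neq b$, the walk $z\,a\,b\,y$ is then a path of length $3$ internally disjoint from $P$, and $P\circ zaby$ is the desired $x,y$-path of length $\ell$.

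To find $a,b$, apply the three-case argument from the proof of \cref{lem:many-absorbers} in $G$ itself, with $u=z$, $v=y$, $X=N(z)$ and $Y=N(y)$. That argument uses only $\delta(G)\ge(1/2-\eta_0)n$ and $\varepsilon$-non-extremality: when $|C|$ is large it uses Extremal Case~1, when $|C|$ is medium it uses degree counting, and when $|C|$ is small it uses Extremal Case~2. It yields at least $cn^2$ edges $ab$ with $a\in X$ and $b\in Y$, where $c=\varepsilon\eta_0/2$.

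Next, discard every such edge meeting $W\cup V(P)\cup\{z,y\}$. There are at most
$$
(|W|+|V(P)|+2)\,n\le(2\gamma+\alpha_0)n^2+2n
$$
of them. This is less than $cn^2$ because $\alpha_0\ll\gamma\ll\eta_0^2$ and $\eta_0\ll\varepsilon$, which give $\gamma\ll\varepsilon\eta_0$. Any surviving edge gives the required $a,b$.

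\textbf{Main obstacle.} The only delicate point is Step 2: we need the closing segment to have length exactly $3$, not merely at most $3$ as in \cref{clm:connect}. This is why we need many $X$–$Y$ edges rather than just one, so that enough of them survive the deletion of $W\cup V(P)$. The constant hierarchy must therefore be checked so that $2\gamma+\alpha_0<\varepsilon\eta_0/2$.
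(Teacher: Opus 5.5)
Your proof is correct, and its skeleton matches the paper's. Both build a path of length $\ell-3$ greedily and then close it with a length-$3$ connector through an edge between the two endpoint neighbourhoods, with non-extremality guaranteeing that edge. The paper grows pieces from both $x$ and $y$ with $r+s=\ell-3$; your one-sided growth is just the case $s=0$.

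The genuine difference is how the middle edge is obtained.

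\begin{itemize}
\item \textbf{The paper.} It works directly with the trimmed neighbourhoods $N(a)\setminus U$ and $N(b)\setminus U$. It shows by contradiction that there is at least one edge between them, rerunning the three-case analysis with thresholds adapted to $|U|$. Extremal Case~1 handles large $|X\cap Y|$. In the middle range, a vertex of $X\cap Y$ would have all its neighbours outside $X\cup Y$, so its degree would be below $n/2$. Extremal Case~2 handles small $|X\cap Y|$.
\item \textbf{Your route.} You apply the robust quadratic count from the proof of \cref{lem:many-absorbers} in $G$ itself. You then delete the at most $(|W|+|V(P)|+2)n$ edges touching forbidden vertices.
\end{itemize}

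Your approach is more modular. No case analysis has to be redone, and the robustness absorbs the forbidden set automatically. The price is the quantitative requirement $2\gamma+\alpha_0<\varepsilon\eta_0/2$, which the hierarchy supplies. The paper's self-contained version needs only $\gamma,\alpha_0\ll\varepsilon$.

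You were right to use the intermediate edge count (constant $\varepsilon\eta_0/2$) rather than the lemma's final bound $|\mathcal A_{uv}|\ge\gamma n^4$. That final bound would not survive the deletion: absorbers with $z_2$ or $z_3$ in $W$ can number about $4\gamma n^4$ when $|W|$ is of order $\gamma n$.
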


\begin{proof}\renewcommand*{\qedsymbol}{$\blacksquare$}
Choose non-negative integers $r$ and $s$ with $r+s=\ell-3$. Either of the two paths below is allowed to consist only of its endpoint. We first
greedily choose two vertex-disjoint paths
$x=x_0x_1\cdots x_r$ and $y_s y_{s-1}\cdots y_0=y$ in $G-W$.  Indeed, at
each step fewer than $|W|+\ell+2\le 2\gamma n+\alpha_0 n+2$ vertices are
forbidden, while $\delta(G)\ge n/2$ and $2\gamma+\alpha_0\ll1$, hence the
current endpoint always has an available neighbor.

Define $a:=x_r$ and $b:=y_s$, and let $U:=W\cup\{x_0,\ldots,x_r,y_0,\ldots,y_s\}$.  Define
$X:=N_G(a)\setminus U$ and $Y:=N_G(b)\setminus U$.  Then $|X|,|Y|\ge n/2-|U|\ge (1/2-2\gamma-\alpha_0)n-2\ge (1/2-\varepsilon)n$.

We claim that there is an edge with one endpoint in $X$ and the other in $Y$.
Suppose not.  Write $A:=X\setminus Y$, $B:=Y\setminus X$,
$C:=X\cap Y$, and $R:=V(G)\setminus (X\cup Y)$.  Since
$|X|,|Y|\ge (1/2-2\gamma-\alpha_0)n-2$, we have$ |R|\le |C|+(4\gamma+2\alpha_0)n+4$.
If $|C|\ge (1/2-\varepsilon)n$, then $e(C)=0$, since there is no edge with
one endpoint in $X$ and the other in $Y$.  Hence $G$ satisfies Extremal
Case~$1$, a contradiction.

Next suppose that $4(2\gamma+\alpha_0)n\le |C|<(1/2-\varepsilon)n$.  Since
there is no edge with one endpoint in $X$ and the other in $Y$, every vertex
of $C$ has no neighbor in $X\cup Y$.  Thus every $c\in C$ satisfies
$d(c)\le |R|\le |C|+(4\gamma+2\alpha_0)n+4<n/2$, where the last inequality
follows from $\alpha_0,\gamma\ll\varepsilon$ and $n$ sufficiently large.  This
contradicts $\delta(G)\ge n/2$.

Finally suppose that $|C|<4(2\gamma+\alpha_0)n$.  Then
$|A|,|B|\ge (1/2-\varepsilon)n$.  Let $S$ be the smaller of $A$ and $B$.
Then $n/2-\varepsilon n\le |S|\le n/2$.  Moreover, since there is no edge
with one endpoint in $X$ and the other in $Y$, all edges from $S$ to
$V(G)\setminus S$ go into $R$.  As $|R|<\varepsilon n$, we obtain
$e(S,V(G)\setminus S)<\varepsilon n^2$.  Thus $G$ satisfies Extremal
Case~$2$, again a contradiction.

Therefore there exists an edge $zz'$ with $z\in X$ and $z'\in Y$.  Then
$x=x_0x_1\cdots x_r z z' y_s y_{s-1}\cdots y_0=y$ is an $x,y$-path in $G-W$ of length $r+1+1+1+s=r+s+3=\ell$.
\end{proof}

\begin{proof}[\textbf{Proof of \cref{Absorbing}}]
Reorder the edges of $H$ as $e_1,\ldots,e_h$ so that, for some
$t\in\{0,\ldots,h\}$, we have
$$\ell_1\ge\cdots\ge \ell_t\ge \alpha_0 n>\ell_{t+1}\ge\cdots\ge \ell_h.$$ 
Thus $e_1,\ldots,e_t$ are the `long' edges and $e_{t+1},\ldots,e_h$ are the `short'
edges.  Since $H$ has no isolated vertices, $|V(H)|\le 2h$.  For each
$i\in[h]$, write $e_i=x_i y_i$.

Assume first that $t\ge1$. By \cref{lem:absorbing-family}, there exists a family
$\mathcal F$ of pairwise vertex-disjoint absorbers such that
$|\mathcal F|\le \xi n$ and
$|\mathcal A_{uv}\cap\mathcal F|\ge \xi^2 n$ for every pair $(u,v)$ with
$u,v\in V(G)$.  Set $f:=|\mathcal F|$.  If $t=0$, let $\mathcal F:=\emptyset$ and $f:=0$.

We now prepare absorbing paths for the `long' edges.  If $t=1$, we connect all
absorbers in $\mathcal F$ into one path $L_1$, using \cref{clm:connect} as in
the proof of \cref{lem:partition-linking}.  Then $L_1$ contains an absorber
for every pair $(u,v)$ with $u,v\in V(G)\setminus V(\mathcal F)$, and
$\ell(L_1)\le 6f-3$.  If $t\ge2$, choose $\tau$ with
$\xi^2<\tau<1/(2h)$.  Since $t\le h$ and $f\ge \xi^2n$, for sufficiently
large $n$ we may choose integers $f_1,\ldots,f_t$ with
$f=f_1+\cdots+f_t$ and $\tau f<f_i<(1-\tau)f$ for every $i\in[t]$.
Applying \cref{lem:partition-linking}, we obtain pairwise vertex-disjoint
paths $L_1,\ldots,L_t$ such that each $L_i$ is obtained by connecting the
absorbers in $\mathcal F_i$.  Moreover, each $L_i$ contains an absorber for
every pair $(u,v)$ with $u,v\in V(G)\setminus V(\mathcal F)$, and
$\ell(L_i)\le 6f_i-3\le 6\xi n$.

If $t=0$, no path $L_i$ is needed.  In all cases, $\left|\bigcup_{i=1}^t V(L_i)\right|\le 6f$. Choose an injective map
$\psi:V(H)\to V(G)\setminus\bigcup_{i=1}^t V(L_i)$, and set
$B:=\psi(V(H))$.  This is possible since
$\left|\bigcup_{i=1}^t V(L_i)\right|\le 6\xi n$ and $|V(H)|\le 2h$.

We first construct the image paths for the `short' edges.  Suppose that
$Q_{t+1},\ldots,Q_{i-1}$ have already been constructed.  Let $W_i$ consist of
the vertices in $B\setminus\{\psi(x_i),\psi(y_i)\}$, all vertices in
$\bigcup_{j=1}^tV(L_j)$, and all internal vertices of the previously
constructed `short' paths.  Since the total length of the `short' edges is at
most $\beta n$, we have $|W_i|\le 2h+6\xi n+\beta n<2\gamma n$.  By
\cref{clm:short-path}, there exists a $\psi(x_i),\psi(y_i)$-path $Q_i$ of
length exactly $\ell_i$ in $G-W_i$.  Repeating this for
$i=t+1,\ldots,h$, we obtain pairwise internally disjoint image paths for all
`short' edges.

It remains to construct the image paths for the `long' edges.  For each
$i\in[t]$, write $L_i=a_i\cdots b_i$.  Suppose that $Q'_1,\ldots,Q'_{i-1}$ have already been constructed.  Let $U_i$ consist of the vertices in $B\setminus\{\psi(x_i)\}$, all vertices in
$\bigcup_{j=1}^tV(L_j)\setminus\{a_i\}$, all internal vertices of the `short'
paths, and all internal vertices of the already constructed `long' paths.  Then
$|U_i|\le 6f+\beta n+6h<\gamma n$.  By \cref{clm:connect}, there exists a $\psi(x_i),a_i$-path $R_i$ of order at most $4$ in $G-U_i$.

Next let $W_i^+$ consist of the vertices in $B\setminus\{\psi(y_i)\}$, all vertices in
$\bigcup_{j=1}^tV(L_j)\setminus\{b_i\}$, all internal vertices of the `short'
paths, all internal vertices of the already constructed `long' paths, and all
internal vertices of $R_i$.  Then $|W_i^+|<2\gamma n$.  By
\cref{clm:short-path} with $\ell=3$, there exists a $b_i,\psi(y_i)$-path
$S_i$ of length exactly $3$ in $G-W_i^+$.

Define $Q'_i:=R_i\circ L_i\circ S_i$.  By construction, $Q'_i$ is a $\psi(x_i),\psi(y_i)$-path whose internal vertices are disjoint from all
previously constructed image paths and from all branch vertices except its
own endpoints.  Since $R_i$ has order at most $4$, it has length at most $3$;
also $S_i$ has length exactly $3$.  Thus $\ell(Q'_i)\le 3+\ell(L_i)+3\le 6\xi n+6<\alpha_0 n\le \ell_i$.  Repeating this for all $i\in[t]$, we obtain pairwise internally disjoint image paths for all `long' edges.

Let $H'$ be the union of the branch vertices $B$, the `short' image paths
$Q_{t+1},\ldots,Q_h$, and the `long' image paths $Q'_1,\ldots,Q'_t$.  Then
$H'$ is an $H$-subdivision.  The image path of $e_i$ has length exactly
$\ell_i$ for every $i>t$, and length at most $\ell_i$ for every $i\le t$.

For each $i\in[t]$, the path $Q'_i$ contains $L_i$ as a subpath.  By the
construction of $L_i$, the path $L_i$ contains an absorber for every pair
$(u,v)$ with $u,v\in V(G)\setminus V(\mathcal F)$.  Since
$V(G)\setminus V(H')\subseteq V(G)\setminus V(\mathcal F)$, each `long' image path $Q'_i$ contains an absorber for every pair $(u,v)$ with $u,v\in V(G)\setminus V(H')$.

Finally,
$$
|V(H')|
\le |V(H)|+\sum_{i=t+1}^h(\ell_i-1)+6f+4t
\le 2h+\beta n+6\xi n+4h
<\gamma n.
$$
\end{proof}

\subsection{Completion of the non-extremal case}

Let $\ell_1,\ldots,\ell_h$ satisfy the assumptions of
\cref{thm:main}. Since $\alpha_0<\alpha$, the condition on the short
subdivision paths implies that those of length less than $\alpha_0n$
have total length at most $\beta n$. Thus the assumptions of
\cref{Absorbing} are satisfied.

Relabel the edges of $H$ so that
$\ell_1\ge\cdots\ge\ell_h$, and let $t$ be the number of indices $i$
for which $\ell_i\ge\alpha_0n$. We have $t\ge1$. Indeed, otherwise all
the prescribed lengths would be less than $\alpha_0n<\alpha n$, so the
condition on the short paths would give
$\sum_{i=1}^h\ell_i\le\beta n$, whereas the spanning length condition
gives $\sum_{i=1}^h\ell_i=n-v(H)+h$, a contradiction for large $n$.

By \cref{Absorbing}, $G$ contains an $H$-subdivision $H'$ with
$|V(H')|\le\gamma n$ such that the image of $e_i$ has length exactly
$\ell_i$ for every $i>t$ and length at most $\ell_i$ for every $i\le t$.
Moreover, for each $i\le t$, this image path has length less than
$\alpha_0n$ and contains an absorber for every ordered pair of vertices
in $V(G)\setminus V(H')$.

For each $i\le t$, let $Q_i$ be the image of $e_i$ in $H'$ and let
$q_i$ be its length. Then $q_i<\alpha_0n\le\ell_i$. Let
$G_0=G-V(H')$ and $n'=|G_0|$. Since $|V(H')|\le\gamma n$, we have
$n'\ge(1-\gamma)n$.

We claim that $G_0$ is $(\varepsilon/2)$-non-extremal. Suppose first that
some $A\subseteq V(G_0)$ witnesses Extremal Case~1 in $G_0$. Then
$|A|\ge(1/2-\varepsilon/2)n'\ge(1/2-\varepsilon)n$, while
$e_G(A)=e_{G_0}(A)<\varepsilon n^2$. Hence $A$ also witnesses Extremal
Case~1 in $G$, a contradiction.

Suppose instead that $A$ witnesses Extremal Case~2 in $G_0$. The same
size estimates show that $A$ satisfies the required size condition in
$G$, and
\[
e_G\bigl(A,V(G)\setminus A\bigr)
 \le e_{G_0}\bigl(A,V(G_0)\setminus A\bigr)
    +|A||V(H')|
 <\varepsilon n^2,
\]
again a contradiction. Thus $G_0$ is $(\varepsilon/2)$-non-extremal.

Moreover,
$\delta(G_0)\ge\delta(G)-|V(H')|
\ge n/2-\gamma n\ge(1/2-\eta_0)n'$.
Therefore \cref{Almost-covering}, applied with $\varepsilon/2$ in place
of $\varepsilon$, yields a Hamilton path $P$ in $G_0$.

Using the spanning length condition and the lengths of the image paths
in $H'$, we obtain
\[
n' =n-|V(H')| =n-v(H)+h-\sum_{i=1}^t q_i-\sum_{i=t+1}^h\ell_i
 =\sum_{i=1}^t(\ell_i-q_i).
\]
We may therefore split $P$ into consecutive subpaths
$P_1,\ldots,P_t$, where $P_i$ has order $\ell_i-q_i$. Write
$P_i=c_i\cdots d_i$. Since $Q_i$ contains an absorber
$z_1z_2z_3z_4$ for $(c_i,d_i)$, replace this subpath of $Q_i$ by
$z_1z_2P_i z_3z_4$. This preserves the endpoints of $Q_i$ and increases
its length by $|V(P_i)|=\ell_i-q_i$.

After performing this replacement for every $i\le t$, the image of each
edge $e_i$ has length exactly $\ell_i$, and the resulting subdivision
covers all vertices of $G$. Hence it is the required spanning
$H$-subdivision. This completes the proof of the non-extremal case of \cref{thm:main}.

\section{The extremal cases}\label{sc4}
Throughout this section, $G$ satisfies the assumptions of
\cref{thm:main}. Choose $\eta>0$ such that
$$
\varepsilon\ll\eta^4\ll\eta\ll\alpha^2.
$$
Set $k:=\floor{h/3}$. We call an edge $e_i\in E(H)$ 
\emph{long} if $\ell_i\ge\alpha n$ and \emph{short}
otherwise.

\subsection{Auxiliary tools}\label{subsec:ext-tools}

We first establish a finite lemma that will provide the parity,
balance and corrections needed in the two extremal cases.

For a map $f:V(H)\to\{A,B\}$, write
$U_A=f^{-1}(A)$ and $U_B=f^{-1}(B)$, and set
$D_f=|U_A|+e_H(U_B)-|U_B|-e_H(U_A)$.
Given $p=(p_e)_{e\in E(H)}\in\{0,1\}^{E(H)}$, call an edge $e=uv$ a
\emph{defect} of $f$ if
$p_e\not\equiv\1\{f(u)\ne f(v)\}\pmod2$, and denote the number of
defects by $z_p(f)$.

\begin{lemma}\label{correction}
Let $H$ be a graph with $h\ge3$ edges and no isolated vertices, and set
$k=\floor{h/3}$.
\begin{enumerate}[label=\textup{(\roman*)},leftmargin=*]
\item If $K$ is obtained from $H$ by subdividing an arbitrary set of edges
exactly once, then
$\operatorname{bal}(K)\le\floor{2h/3}$.

\item For every $p\in\{0,1\}^{E(H)}$, there is a map
$f:V(H)\to\{A,B\}$ such that $z_p(f)+D_f\le2k$.

\item For every $e=uv\in E(H)$, there is an ordering
$\pi=(v_1,\ldots,v_{v(H)})$ of $V(H)$ such that, for every
$j\in[v(H)]$, writing $P_j=\{v_1,\ldots,v_j\}$, we have
$$
e_{H-e}\bigl(P_j,V(H)\setminus P_j\bigr)
 +2-\1\bigl\{|\{u,v\}\cap P_j|=1\bigr\}\le2k+2.
$$
\end{enumerate}
\end{lemma}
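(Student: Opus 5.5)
The plan is to prove (ii) first, deduce (i) from it, and treat (iii) separately as a cutwidth-type statement.

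\textbf{Part (ii), the core.} The quantities $z_p(f)$ and $D_f$ are both additive over the connected components of $H$. I would therefore prove a slightly stronger, signed statement by induction on $h$: for every $p$ and each prescribed sign $\sigma\in\{+,-\}$, there is $f$ with $z_p(f)+\sigma D_f\le 2\lfloor h/3\rfloor$, and in fact with $z_p(f)\le 2\lfloor h/3\rfloor$ and $|D_f|$ controlled.

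\begin{itemize}
\item \textbf{Base cases.} For $h\le 5$ I would check directly, with a small table: stars, paths, triangles, and their unions with at most $5$ edges.
\item \textbf{Inductive step.} I would peel off a set $S$ of exactly three edges forming a ``local piece''. Typical choices are three edges at a vertex of maximum degree, a path $P_4$, or a triangle. On the vertices meeting $S$, I would define $f$ locally so that these three edges contribute at most $2$ to $z_p+\sigma D$, where $D$ counts only the new vertices and edges. This is possible because flipping a vertex changes the parity status of every edge incident to it simultaneously. Among the two or four local choices, one can make at most one edge a defect, at the cost of a $D$-contribution of $\pm1$ whose sign we may choose.
\item \textbf{Gluing.} The freedom in the sign of the local $D$ is what lets the contributions cancel along the induction. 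Edges of $S$ that touch previously coloured vertices only help, since the local colour can still be chosen after seeing them.
\end{itemize}

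I expect the main obstacle to be exactly this local analysis: showing that some three-edge piece always admits such a good colouring when it interacts with the rest of $H$. This is where the constant $2/3$ comes from, and the triangle components in \cref{prop:linear-lower} show it is tight. If the local case analysis becomes unwieldy, my fallback is a global argument. Take $f$ minimising $z_p(f)+|D_f|$. Then use that flipping a single vertex $v$ changes $D_f$ by a bounded amount depending on $d(v)$ and changes $z_p$ by $d(v)-2\,\mathrm{def}(v)$. Averaging over all single flips should bound the minimum by $2h/3$.

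\textbf{Part (i) from (ii).} Let $p_e=1$ exactly when $e$ is not subdivided, so that parity asks that its endpoints get different sides; let $p_e=0$ for subdivided $e$. Take $f$ from (ii) and colour each subdivision vertex opposite to the endpoints of its edge.

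\begin{itemize}
\item \textbf{Making $K$ bipartite.} Each defect is killed by deleting one vertex: the subdivision vertex if $e$ is subdivided, and otherwise one endpoint. This costs $z_p(f)$ deletions.
\item \textbf{Making it balanced.} A direct count shows the class sizes differ by essentially $D_f$, after correcting for the deleted vertices. A further $|D_f|$ deletions, taken from the larger side, balance the graph.
\item \textbf{Total.} This uses at most $z_p+|D_f|\le 2k\le\lfloor 2h/3\rfloor$ deletions. I would apply (ii) with the appropriate sign, or with $f$ replaced by its complement, which negates $D_f$.
\end{itemize}

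\textbf{Part (iii).} Set $H^-=H-e$, which has $h-1$ edges. I would build the ordering greedily.

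\begin{itemize}
\item \textbf{Placing $u$ and $v$.} Put $u$ first. Delay $v$ until the cut is small, or put it last. The term $2-\1\{|\{u,v\}\cap P_j|=1\}$ is $1$ exactly while $u$ and $v$ are separated, so the requirement becomes: cut of $H^-$ at most $2k+1$ while they are separated, and at most $2k$ otherwise.
\item \textbf{Greedy rule.} At each step, add a vertex with the most neighbours already placed, and process components one at a time, finishing each before starting the next.
\item \textbf{Bounding the cut.} A vertex with $d$ back-edges raises the cut by $\deg(w)-2d$. A potential argument then bounds every prefix cut by roughly $(h-1)/2$ plus a low-degree error, which is at most $2\lfloor h/3\rfloor+1$ once $h\ge3$.
\item \textbf{Small cases.} These are the slack-critical ones, for instance $h=3,4,5$ with $k=1$. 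I would verify them by hand: $H^-$ then has at most $4$ edges and we need cut at most $3$ in the separated phase, which a suitable ordering clearly achieves.
\end{itemize}
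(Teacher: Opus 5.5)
\textbf{Part (i) via (ii) does not work.} Statement (ii) controls only the signed quantity $z_p(f)+D_f$. Complementing $f$ preserves $z_p$ and negates $D_f$, so (ii) is equivalent to $\min_f\bigl(z_p(f)-|D_f|\bigr)\le 2k$. It therefore says nothing about $z_p(f)+|D_f|$, which is what your deletion count needs. Replacing $f$ by its complement makes $D_f\ge0$, but then (ii) no longer bounds $z_p+D_f$.

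Worse, the strengthening you would need is false. Take $H=K_{1,4}$ with no edge subdivided, so $p\equiv1$ and $2k=\lfloor 2h/3\rfloor=2$. If the centre and $a$ of the leaves are coloured $A$, then $z_p(f)=a$ and $D_f=a-3$. Hence $z_p(f)+|D_f|=3$ for $a\le3$ and $5$ for $a=4$; more generally $K_{1,h}$ gives $h-1$. So your chain $z_p+|D_f|\le 2k$ fails for every $f$. Neither your three-edge induction (which needs one $f$ good for both signs) nor the fallback can succeed, since the fallback's target $2h/3=8/3$ is below $3$. Yet $\operatorname{bal}(K_{1,4})=1$, by deleting the centre.

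The bound in (i) needs to delete high-degree vertices outright and then recolour. The accounting ``one deletion per defect plus $|D_f|$'' does not capture this; repairing it amounts to proving (i) from scratch. The paper does exactly that:
\begin{itemize}
\item it deletes the set $X$ of vertices of $H$-degree at least $3$, plus one vertex of each odd cycle of $K-X$;
\item it uses $3s\le 2(h-g)$ and $o\le g/3$, and balances by flipping odd-order path components;
\item in the exceptional case $g=0$, $s=\lfloor 2h/3\rfloor$ with an odd number of odd path components, it keeps one well-chosen $v\in X$ and cancels the discrepancy of its star against the other components.
\end{itemize}
Conversely, (ii) needs none of your local analysis. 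Under a uniformly random colouring the defect indicators are pairwise independent, so $\mathbb E z_p=h/2$ with variance $h/4>0$. Hence some $f$ has $z_p(f)\le\lfloor (h-1)/2\rfloor\le 2k$, and complementing $f$ if necessary gives $D_f\le0$.

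\textbf{Part (iii): the greedy rule fails on stars.} Let $H=K_{1,8}$, so $k=2$ and the right-hand side is $6$, and let $e=cv$ with $c$ the centre. In $H-e$ the seven remaining leaves are adjacent only to $c$. Before $c$ is placed every other unplaced vertex has no placed neighbour, so ``most placed neighbours'' places $c$ before any second neighbour of $c$. At that moment at least six edges of $H-e$ cross the prefix, so the left-hand side is at least $6+1=7>6$, whatever you do with $u$ and $v$. Thus no bound of the form ``$(h-1)/2$ plus a low-degree error'' holds for this rule.

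The missing idea is to place each vertex of degree $d\ge3$ in the middle of its neighbourhood. The paper proves by induction that every graph $J$ with $m\ge2$ edges has an ordering in which all prefix cuts are at most $\lfloor 2m/3\rfloor$. It orders $J-x$ for a vertex $x$ of degree $d\ge3$ and inserts $x$ right after its $\lfloor d/2\rfloor$-th neighbour, so $x$ adds at most $\lceil d/2\rceil\le\lfloor 2d/3\rfloor$ to any cut; the case with one remaining edge and $d=3$ is checked by hand. Applied to $H-e$, this bounds the left-hand side by $\lfloor 2(h-1)/3\rfloor+2\le 2k+2$ for every prefix, with no need for the indicator term or any special placement of $u$ and $v$.
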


\begin{proof}
We prove the three assertions separately.

\smallskip
\noindent\emph{\textbf{Proof of \textup{(i)}.}}
Let $X=\{v\in V(H):d_H(v)\ge3\}$, put $s=|X|$, and let $g$ be the number
of edges lying in cycle components of $H$. Every component of $K-X$ is a
path or a cycle, and every cycle among them arises from a cycle component
of $H$.

Delete $X$ and one vertex from each odd cycle of $K-X$, and let $o$ be
the number of vertices deleted from odd cycles. The remaining graph is
bipartite. Its path components of odd order are the only components with
nonzero discrepancy, and their signs may be chosen independently.
Consequently, $\operatorname{bal}(K)\le s+o+1$, where the final $1$ is
unnecessary if the number of odd-order path components is even.

Since no vertex of $X$ lies in a cycle component counted by $g$, we have
$3s\le2(h-g)$ and $o\le g/3$. Since every cycle component contains at
least three edges, either $g=0$ or $g\ge3$. If $g\ge3$, then
$s+o+1\le 2(h-g)/3+ g/3+1\le 2h/3$, proving the assertion.

We may therefore assume that $g=0$. Put $C=\floor{2h/3}$. The preceding
argument applies unless $s=C$ and $K-X$ has an odd number of odd-order
path components. We assume this exceptional case.

Let $L=V(H)\setminus X$ and $w=\sum_{u\in L}d_H(u)$. Since
$3s\le2h-w$, we have $w=0$, $w\le2$, or $w\le1$ according as
$h\equiv0,1$, or $2\pmod3$.

Let $H_0$ be the spanning subgraph of $H$ formed by the edges subdivided
in $K$, set $m=e(H_0[X])$, and let $a_L$ be the number of edges of $H_0$
with at least one endpoint in $L$. Since $K-X$ has odd order,
$|L|+m+a_L$ is odd. Set
$\varepsilon_L=(|L|+a_L)\bmod2$, so that $m+\varepsilon_L$ is odd.

We claim that there is a vertex $v\in X$ with $N_H(v,L)=\varnothing$ and
$2d_{H_0[X]}(v)\le m+\varepsilon_L+1$.
Let $Y=\{x\in X:N_H(x,L)=\varnothing\}$. At most $w$ vertices of $X$
meet $L$, so $|Y|\ge s-w$. If $|Y|\ge4$, then
$\sum_{x\in Y}d_{H_0[X]}(x)\le2m$, and some $v\in Y$ has
$d_{H_0[X]}(v)\le\floor{m/2}$.

Suppose that $|Y|\le3$. The bounds on $w$ leave only
$(h,s,w)=(3,2,0)$, $h=4$ with $s=2$ and $w\le2$, $h=5$ with $s=3$
and $w\le1$, or $h=7$ with $s=4$. The first three possibilities contradict
$3s\le2e_H(X)+e_H(X,L)\le s(s-1)+w$. Hence $h=7$ and $s=4$.

Let $q=e_H(X,L)\le2$. If $q=0$, then $Y=X$ and the averaging argument
applies. If $q=1$, then $H[X]=K_4$ and $|Y|=3$. Some $v\in Y$ satisfies
$d_{H_0[X]}(v)\le\ceil{m/2}$: for $m\le4$ this follows by summing the
degrees over $Y$, while for $m\ge5$ it follows from
$d_{H_0[X]}(v)\le3$. If $q=2$, then $H[X]=K_4-e$ and $|Y|\ge2$.
For any two vertices of $Y$, the sum of their $H_0[X]$-degrees is at most
$m+1$, so one of them again has degree at most $\ceil{m/2}$.
Since $m+\varepsilon_L$ is odd, this proves the claim.

Fix such a vertex $v$ and let $r=d_{H_0[X]}(v)$. After deleting
$X\setminus\{v\}$, the remaining graph is the disjoint union of a star
with center $v$ and $r$ leaves, $m-r$ isolated vertices, and a collection
of paths on $|L|+a_L$ vertices.

Let $u$ be the number of odd-order paths in this collection. Then
$u\equiv\varepsilon_L\pmod2$ and $u\ge\varepsilon_L$. The star has
discrepancy $|1-r|$, while there are $m-r+u$ other components of
discrepancy one. Their signs can be chosen to cancel the discrepancy of
the star: the required parity follows from $m+\varepsilon_L$ being odd,
and the required magnitude follows from
$2r\le m+\varepsilon_L+1$. Hence deleting only $s-1$ vertices leaves a
balanced bipartite graph. Therefore
$\operatorname{bal}(K)\le s-1<C$, proving \textup{(i)}.

\smallskip
\noindent\emph{\textbf{Proof of \textup{(ii)}.}}
color the vertices of $H$ independently and uniformly with $A$ and $B$.
Each edge is a defect with probability $1/2$, and the defect indicators
of any two distinct edges are independent. Hence
$\mathbb Ez_p=h/2$ and $\operatorname{Var}(z_p)=h/4>0$.

If $h$ is odd, some coloring has
$z_p\le\floor{h/2}$. If $h$ is even, the positive variance implies that
some coloring has $z_p\le h/2-1$. Thus in either case there is a
coloring with $z_p\le\floor{(h-1)/2}$.

Complementing all colors leaves $z_p$ unchanged and replaces $D_f$
by $-D_f$. Choosing the better coloring from this complementary pair
gives
$z_p(f)+D_f\le\floor{(h-1)/2}\le2\floor{h/3}=2k$.

\smallskip
\noindent\emph{\textbf{Proof of \textup{(iii)}.}}
We first show that every graph $J$ with $m\ge2$ edges has a vertex ordering
in which every prefix cut has size at most $\floor{2m/3}$. We argue by
induction on $m$.

If $\Delta(J)\le2$, order each path component along the path and each
cycle component cyclically, placing the components consecutively. Every
prefix cut then has size at most one when $m=2$ and at most two when
$m\ge3$.

Now let $x$ be a vertex of degree $d\ge3$ and let $r=m-d$. If $r\ge2$,
take an inductive ordering of $J-x$; if $r=1$, place the ends of its unique
edge consecutively; and if $r=0$, take any ordering. List the neighbors
of $x$ in the order in which they occur and insert $x$ immediately after
the $\floor{d/2}$th neighbor. The edges incident with $x$ then contribute
at most $\ceil{d/2}$ to every prefix cut.

If $r=0$ or $r\ge2$, then every prefix cut has size at most
$\floor{2r/3}+\ceil{d/2}\le\floor{2m/3}$, where we use
$\ceil{d/2}\le\floor{2d/3}$ and $m=r+d$.

If $r=1$ and $d\ge4$, then every prefix cut has size at most
$1+\ceil{d/2}\le\floor{2m/3}$, since
$\ceil{d/2}\le\floor{(2d-1)/3}$ and $m=d+1$.

It remains to consider $r=1$ and $d=3$. Write the edges incident with
$x$ as $xa,xb,xc$. If the remaining edge is disjoint from
$\{a,b,c\}$, order the relevant vertices as $a,b,x,c,y,z$; if it is
$ab$, use $a,b,x,c$; and if it is $ay$ with $y\notin\{a,b,c\}$, use
$y,a,x,b,c$. In each case every prefix cut has size at most two.
This proves the auxiliary assertion.

Apply it to $H-e$, which has $h-1$ edges. For every prefix $P_j$, the
expression in \textup{(iii)} is at most
$\floor{2(h-1)/3}+2\le2\floor{h/3}+2=2k+2$.
\end{proof}
Next we record two elementary identities. Let $R$ be a subdivision of
$H$, and let $V(R)=A_R\cup B_R$. Denote by $(U_A,U_B)$ the
corresponding partition of the branch vertices of $R$, and set
$$
D:=|U_A|+e_H(U_B)-|U_B|-e_H(U_A),\qquad
t_A:=e(R[A_R]),\qquad t_B:=e(R[B_R]).
$$
Thus, $t_A$ and $t_B$ count the edges of $R$ lying entirely in the two
parts. We have
\begin{equation}\label{eq:balance}
 |A_R|-|B_R|=D+t_A-t_B.
\end{equation}
Indeed,
$$
\begin{aligned}
|A_R|-|B_R|-t_A+t_B
 &=\frac12\left(
   \sum_{v\in A_R}(2-d_R(v))
   -\sum_{v\in B_R}(2-d_R(v))
   \right)\\
 &=\frac12\left(
   \sum_{v\in U_A}(2-d_H(v))
   -\sum_{v\in U_B}(2-d_H(v))
   \right)\\
 &=D.
\end{aligned}
$$
The second equality holds because every internal vertex of $R$ has degree
two, while the final equality follows from
$\sum_{v\in U_A}d_H(v)=2e_H(U_A)+e_H(U_A,U_B)$ and the analogous identity
for $U_B$.

For each $i\in[h]$, let $R_i$ be the path of $R$ replacing $e_i$, set
$s_i=e(R_i)$, and let $t_i$ be the number of edges of $R_i$ lying entirely
in one of the two parts. Also, let $\chi_i=1$ if the branch vertices
corresponding to the ends of $e_i$ lie in different parts, and let
$\chi_i=0$ otherwise. Since $R_i$ has $s_i-t_i$ crossing edges, and this
number is odd precisely when $\chi_i=1$, we have
\begin{equation}\label{eq:path}
 t_i\equiv s_i-\chi_i\pmod2.
\end{equation}

We shall also use the following two Hamilton path results.

\begin{lemma}\label{hpaths}
The following statements hold.
\begin{enumerate}[label=\textup{(\roman*)},leftmargin=*]
\item Let $J$ be an $m$-vertex graph with
$\delta(J)\ge(m+1)/2$. Then, for every two distinct vertices
$x,y\in V(J)$, the graph $J$ contains a Hamilton $x$--$y$ path.

\item Let $J=(X,Y)$ be a bipartite graph with
$|X|=|Y|=m$ and $\delta(J)\ge m/2+1$. Then, for every
$x\in X$ and $y\in Y$, the graph $J$ contains a Hamilton
$x$--$y$ path.
\end{enumerate}
\end{lemma}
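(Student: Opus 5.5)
Both parts are classical results, and I would prove them by the standard closure (Pósa-type rotation) arguments rather than anything specific to this paper.

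For (i), the plan is to reduce Hamilton-connectedness to Hamiltonicity. Add a new vertex $w$ adjacent only to $x$ and $y$, or more conveniently add the edge $xy$ if it is absent. It then suffices to find a Hamilton cycle of $J+xy$ through the edge $xy$; deleting $xy$ gives the required Hamilton $x$--$y$ path. I would use the Bondy--Chvátal closure. Two nonadjacent vertices $a,b$ with $d(a)+d(b)\ge m+1$ can be joined without changing whether the graph has a Hamilton cycle containing a fixed edge; the threshold for this is $m+1$, one more than the usual $m$. Under $\delta(J)\ge(m+1)/2$ every nonadjacent pair has degree sum at least $m+1$, so the closure is $K_m$. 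In $K_m$ every edge $xy$ lies on a Hamilton cycle, which proves (i).

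If I argue directly instead, I would take a longest $x$--$y$ path $P=x\cdots y$ and first show it is Hamiltonian by a rotation argument. This is Ore's theorem for Hamilton-connectedness, with $\sigma_2\ge m+1$. The only delicate point is checking the off-by-one counting with the fixed endpoints, and that check is where I expect the real work in (i) to lie.

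For (ii), I would use the bipartite analogue, the Moon--Moser/Jackson-type result that a balanced bipartite graph with $\delta\ge m/2+1$ is Hamilton-biconnected. Concretely, I would add the edge $xy$ (note $x\in X$, $y\in Y$) and apply the bipartite closure. In that closure, nonadjacent $a\in X$, $b\in Y$ with $d(a)+d(b)\ge m+2$ may be joined while preserving the existence of a Hamilton cycle through the fixed edge $xy$. The degree condition gives $d(a)+d(b)\ge m+2$ for every such pair, so the closure is $K_{m,m}$, which has a Hamilton cycle through any edge $xy$. Removing $xy$ yields the Hamilton $x$--$y$ path.

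The main obstacle in both parts is verifying the closure lemma with a prescribed edge, specifically the $+1$ in the degree-sum threshold. Take a Hamilton cycle through $xy$ that uses the new edge $ab$, and write the resulting path from $a$ to $b$ as $a=u_1\cdots u_m=b$. The standard crossing-edge pigeonhole then produces an index $i$ with $au_{i+1},bu_i\in E$, which lets us rebuild the cycle without $ab$. The extra $1$ in the threshold guarantees that such an index can be chosen so that the removed edge $u_iu_{i+1}$ is not $xy$. In the bipartite case the same count runs over indices of the correct parity, so the threshold becomes $m+2$.
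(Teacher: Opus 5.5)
Your proof is correct. Like the paper, you close up to $K_m$ (resp.\ $K_{m,m}$), but you enforce the endpoints $x,y$ with a different device.

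The paper uses only the ordinary closure lemma for Hamiltonicity: degree-sum threshold $N$ on $N$ vertices, resp.\ $M+1$ for balanced bipartite graphs with classes of size $M$. It applies this lemma to an auxiliary graph. In (i) it adds a new vertex $w$ adjacent only to $x$ and $y$, so the graph has $m+1$ vertices and $\delta(J)\ge(m+1)/2$ is exactly the threshold. In (ii) it adds new vertices $x'$ and $y'$, placed in the $Y$- and $X$-class respectively, together with the path $xx'y'y$, so the classes stay balanced of size $m+1$. The new vertices keep degree two throughout, so every Hamilton cycle is forced through them, and deleting them gives the $x$--$y$ path.

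You instead add the edge $xy$ and prove a closure lemma for Hamilton cycles through a prescribed edge, with the threshold raised by one. Your count is right. The condition $d(a)+d(b)\ge m+1$ (resp.\ $m+2$) forces at least two admissible switching indices among the $m-1$ positions (resp.\ $m$ odd positions). At most one of them can be the position of $xy$, so the other one works.

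In effect, the paper's extra vertex $w$ plays exactly the role of your $+1$. The paper gets by with the textbook lemma at the cost of a gadget. Your version needs a slightly stronger, equally short lemma but no auxiliary vertices. This is a little cleaner in (ii), where the paper needs two new vertices to preserve balance while your added edge $xy$ already respects the bipartition.

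Finally, the degree hypotheses force $m\ge3$ in (i) and $m\ge2$ in (ii). So $K_m$ and $K_{m,m}$ do contain a Hamilton cycle through every edge, as your last step requires.
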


\begin{proof}
We begin with \textup{(i)}. We use the following closure observation.
Let $F$ be an $N$-vertex graph, and let $u,v$ be nonadjacent vertices
satisfying $d_F(u)+d_F(v)\ge N$. Then $F$ is Hamiltonian if and only if
$F+uv$ is Hamiltonian.

Only one direction requires proof. Suppose that a Hamilton cycle of
$F+uv$ uses $uv$. Deleting this edge gives a Hamilton path
$u=z_1z_2\ldots z_N=v$. The two sets
$$
\{i\in[N-1]:uz_{i+1}\in E(F)\}
\quad\text{and}\quad
\{i\in[N-1]:vz_i\in E(F)\}
$$
have total size at least $N$, and hence intersect. The corresponding two
edges close the path into a Hamilton cycle of $F$.

Now fix distinct $x,y\in V(J)$ and add a new vertex $w$ adjacent only to
$x$ and $y$. The resulting graph has $m+1$ vertices, and the ends of
every missing edge inside $V(J)$ have degree sum at least $m+1$.
We may therefore add all such edges without changing Hamiltonicity.
The resulting graph consists of $K_m$ together with a vertex $w$ adjacent
to $x$ and $y$, and is Hamiltonian. Reversing the closure operations and
then deleting $w$ gives a Hamilton $x$--$y$ path in $J$.

For \textup{(ii)}, we use the bipartite analogue of the same closure
observation. Let $F=(P,Q)$ be balanced with $|P|=|Q|=M$, and let
$u\in P$ and $v\in Q$ be nonadjacent. If
$d_F(u)+d_F(v)\ge M+1$, then adding $uv$ does not change Hamiltonicity.
Indeed, after deleting $uv$ from a Hamilton cycle, the same switching
argument applies to the $M$ possible reconnection positions.

Fix $x\in X$ and $y\in Y$. Add vertices $x'$ and $y'$, let
$X'=X\cup\{y'\}$ and $Y'=Y\cup\{x'\}$, and add the edges
$xx'$, $x'y'$ and $y'y$. The new bipartition classes have size $m+1$.
For every missing edge between $X$ and $Y$, the sum of the degrees of its
ends is at least $m+2$. We may therefore complete the pair $(X,Y)$ to
$K_{m,m}$ without changing Hamiltonicity.

The completed graph is Hamiltonian: take a Hamilton $x$--$y$ path in
$K_{m,m}$ and add the three edges $xx'$, $x'y'$ and $y'y$. Since
$x'$ and $y'$ have degree two throughout, reversing the closure operations
yields a Hamilton cycle that still contains these three edges. Deleting
$x'$ and $y'$ then gives the required Hamilton $x$--$y$ path in $J$.
\end{proof}

\begin{lemma}\label{linearforest}
Let $J$ be a graph, let $R\subseteq V(J)$, and let $d$ be a non-negative integer. Suppose that $|R|>6d$ and $d_J(x)\ge d$ for every $x\in R$. Then, for every integer $q$ with $0\le q\le2d$, the graph $J$ contains a linear forest with exactly $q$ edges.
\end{lemma}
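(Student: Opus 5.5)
The plan is to reduce to finding a single linear forest with at least $2d$ edges. Deleting any edge of a linear forest leaves a linear forest, so once we have a linear forest with at least $2d\ge q$ edges we can delete edges one at a time until exactly $q$ remain. The case $d=0$ is trivial, since then $q=0$ and the empty forest works. So assume $d\ge1$. Let $F$ be a linear forest in $J$ with the maximum number of edges, and put $s=e(F)$. Suppose for a contradiction that $s<2d$. We will derive a contradiction by a local exchange argument.

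Let $U=R\setminus V(F)$. Every vertex of $V(F)$ lies on a path of $F$ with at least one edge, so $|V(F)|\le 2s<4d<6d<|R|$, and hence $U\neq\varnothing$. Fix $x\in U$; we locate its neighbours.
\begin{itemize}
\item If $x$ has a neighbour $y\notin V(F)$, then $F+xy$ is a larger linear forest.
\item If $x$ is adjacent to an endpoint of a path of $F$, then attaching $x$ to that endpoint again gives a larger linear forest.
\end{itemize}
Both contradict maximality, so every neighbour of $x$ is an internal vertex of $F$.

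Let $I'$ be the set of internal vertices of $F$ that have a neighbour in $U$. The key step is to show that no vertex of $I'$ is adjacent on its path to a vertex with a neighbour in $U$. Take $v\in I'$ with neighbour $x\in U$, and let $b$ be a path-neighbour of $v$ in $F$. Suppose $b$ has a neighbour $x'\in U$.
\begin{itemize}
\item If $x'=x$, replace the edge $vb$ by the path $vxb$. This gives a linear forest with one more edge.
\item If $x'\neq x$, form $F-vb+vx+bx'$. Removing $vb$ splits the path of $v$ into two paths, one ending at $v$ and one ending at $b$. Since $x$ and $x'$ lie outside $V(F)$, appending $x$ at $v$ and $x'$ at $b$ yields a linear forest with $s+1$ edges.
\end{itemize}
Either way maximality is contradicted. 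Since endpoints of paths have no neighbours in $U$ by the previous paragraph, we conclude that on each path of $F$ the vertices of $I'$ are pairwise non-consecutive internal vertices.

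Now count. A path with $k+1$ edges has $k$ internal vertices, and at most $\ceil{k/2}\le (k+1)/2$ of them can be pairwise non-consecutive. Summing over the paths of $F$ gives $|I'|\le s/2<d$. On the other hand, any $x\in U\subseteq R$ satisfies $d_J(x)\ge d$, and all of its neighbours lie in $I'$, so $|I'|\ge d$. This is a contradiction, hence $s\ge2d$ and the lemma follows.

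I expect the main obstacle to be the exchange step for $x'\neq x$. There one must check that the modified graph is still a linear forest: no vertex reaches degree $3$ and no cycle is created. This holds because $x$ and $x'$ are new vertices, each attached as a pendant to a distinct endpoint of the two pieces created by deleting $vb$.
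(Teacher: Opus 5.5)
Your proof is correct, and it takes a different route from the paper's at the key step. Both arguments start the same way: take a maximum linear forest $F$, suppose $e(F)<2d$, and observe that every neighbour of a vertex of $R$ left uncovered by $F$ is an internal vertex of $F$.

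The paper then uses $|R|>6d$ to find more than $2d$ uncovered vertices of $R$. It applies Hall's theorem, after doubling each internal vertex, to the bipartite graph between these uncovered vertices and the internal vertices of $F$. This builds a linear forest with $2d$ edges explicitly, contradicting maximality.

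You instead use a local exchange. Inserting an uncovered vertex into an edge of $F$ shows that the relevant internal vertices are pairwise non-consecutive along the paths of $F$. Counting such vertices path by path then gives at most $e(F)/2<d$ of them.

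Your route buys two things:
\begin{enumerate}
\item It is more elementary, since no Hall's theorem or doubling trick is needed.
\item It needs only one vertex of $R$ uncovered by $F$. Since $F$ covers at most $2e(F)\le 4d-2$ vertices, your argument already works under the weaker hypothesis $|R|\ge 4d-1$.
\end{enumerate}

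Your second exchange, the case $x'\neq x$, is unnecessary. Fix a single uncovered $x\in R$. The first exchange, replacing $vb$ by $vxb$, already shows that $N_J(x)$ consists of pairwise non-consecutive internal vertices of $F$. Since $|N_J(x)|\ge d$, the same count finishes the proof.

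The paper's Hall-type observation is a self-contained statement: a bipartite graph with at least $2d$ vertices of degree at least $d$ on one side contains a linear forest with $2d$ edges. It constructs the forest directly. The cost is that it needs more than $2d$ uncovered vertices, and hence the stronger assumption $|R|>6d$. That assumption is harmless in its single application in Extremal Case~1.
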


\begin{proof}
The assertion is trivial when $d=0$. We first record a simple consequence of Hall's theorem. Suppose that $L=(P,Q)$ is a bipartite graph with $|P|\ge 2d$ and
$d_L(x)\ge d$ for every $x\in P$. Then $L$ contains a linear forest
with $2d$ edges.

Indeed, choose $P_0\subseteq P$ with $|P_0|=2d$, and replace every
vertex of $Q$ by two copies. More precisely, let
$Q^\ast=Q\times\{1,2\}$, where $x\in P_0$ is adjacent to $(y,i)\in Q^\ast$
whenever $xy\in E(L)$. For every nonempty $X\subseteq P_0$,
$|N_{Q^\ast}(X)|=2|N_L(X)|\ge2d\ge|X|$. Hence Hall's theorem yields a matching saturating $P_0$.

Projecting the two copies of each vertex of $Q$ back onto $Q$, we
obtain a subgraph $H\subseteq L$ with $2d$ edges such that every
vertex of $P_0$ has degree one and every vertex of $Q$ has degree at
most two. Thus $\Delta(H)\le2$, and $H$ contains no cycle, since every
cycle would contain a vertex of $P_0$ of degree two. Hence $H$ is a
linear forest.

Now let $F\subseteq J$ be a spanning linear forest with the maximum
possible number of edges. Suppose that $e(F)<2d$. The nontrivial
components of $F$ contain at most $2e(F)<4d$ vertices. Since
$|R|>6d$, there are more than $2d$ vertices of $R$ that are isolated
in $F$, let $I$ denote this set.

By the maximality of $F$, every neighbor of a vertex of $I$ is an
internal vertex of a nontrivial component of $F$. Let $U$ be the set
of all such internal vertices. Hence every $x\in I$ has at least $d$
neighbors in $U$. Applying the preceding observation to the bipartite graph $J[I,U]$,
we obtain a linear forest with $2d$ edges, contradicting the choice
of $F$. Therefore $e(F)\ge2d$.

Finally, repeatedly deleting an end edge from a nontrivial component
of $F$ gives a linear forest with exactly $q$ edges for every
$0\le q\le2d$.
\end{proof}

The following allocation lemma follows from hypergeometric concentration.

\begin{lemma}\label{allocation}
Fix $K_0\in\mathbb N$ and $c,\rho>0$. Then, for every sufficiently large
integer $N$, the following holds. Let $Z$ be a set of size $N$, and let $W$ be a set with $|W|\le2N$. For each $v\in W$, let $F(v)\subseteq Z$ satisfy $|F(v)|\le\rho N$.
Suppose that $s\le K_0$ and that $k_1,\ldots,k_s$ are positive integers
satisfying $k_1+\cdots+k_s=N$ and $k_i\ge cN$ for every $i\in[s]$.
Then $Z$ has a partition
$Z=Z_1\cup\cdots\cup Z_s$ with $|Z_i|=k_i$ such that
\begin{equation}\label{eq:prob}
 |F(v)\cap Z_i|\le2\rho k_i+N^{2/3}
\end{equation}
for every $v\in W$ and $i\in[s]$.
\end{lemma}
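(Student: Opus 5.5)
We take a uniformly random partition $Z=Z_1\cup\cdots\cup Z_s$ with $|Z_i|=k_i$ for every $i\in[s]$; such a partition exists because $k_1+\cdots+k_s=N$. We then show that, with positive probability, \eqref{eq:prob} holds for every $v\in W$ and $i\in[s]$ simultaneously.

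Fix $v\in W$ and $i\in[s]$. The marginal distribution of $Z_i$ is a uniformly random $k_i$-subset of $Z$, so $X_{v,i}:=|F(v)\cap Z_i|$ is hypergeometric with
$$
\mathbb E X_{v,i}=\frac{|F(v)|\,k_i}{N}\le\rho k_i .
$$
We do not want the multiplicative Chernoff bound of \cref{lem321}: $|F(v)|$ may be tiny, so the mean may be tiny. Instead we use a Hoeffding-type additive tail bound for the hypergeometric distribution. Hoeffding observed that it is dominated by the binomial in the sense of convex functions, so the Chernoff–Hoeffding bound transfers and gives
$$
\mathbb P\bigl(X_{v,i}\ge\mathbb E X_{v,i}+\lambda\bigr)\le e^{-2\lambda^2/k_i}.
$$
Take $\lambda=N^{2/3}$. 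Since $k_i\le N$, the right-hand side is at most $e^{-2N^{1/3}}$. On the complementary event,
$$
X_{v,i}<\rho k_i+N^{2/3}\le 2\rho k_i+N^{2/3},
$$
which is \eqref{eq:prob}. The factor $2$ is slack here; it would also let us use the multiplicative form in the regime where the mean is large, if we preferred that route.

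It remains to take a union bound. There are at most $|W|\cdot s\le 2N K_0$ pairs $(v,i)$, so the failure probability is at most $2K_0N e^{-2N^{1/3}}$, which is less than $1$ for all sufficiently large $N$ (depending on $K_0$). Hence a good partition exists.

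The only point needing care is the concentration inequality. The multiplicative bound of \cref{lem321} is stated only for binomial variables and is useless for small means. If one wishes to stay within \cref{lem321}, one can proceed as follows. Pad $F(v)$ with dummy elements to a set $F'(v)\supseteq F(v)$ of size exactly $\lceil\rho N\rceil$. Then $|F'(v)\cap Z_i|$ has mean about $\rho k_i\ge\rho cN$, which is linear in $N$. Next, couple the hypergeometric variable with a binomial, or quote Hoeffding's domination, and apply \cref{lem321} with $a=1/2$. This gives $|F(v)\cap Z_i|\le|F'(v)\cap Z_i|\le\tfrac32\rho k_i+1$ with failure probability $e^{-\Omega(N)}$. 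This is stronger than needed, and it is here that the hypothesis $k_i\ge cN$ is used.
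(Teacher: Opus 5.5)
Your proof is correct and is exactly the paper's argument: a uniformly random partition with the prescribed class sizes, Hoeffding's additive tail bound for the hypergeometric variable $|F(v)\cap Z_i|$ with deviation $N^{2/3}$ giving failure probability at most $e^{-2N^{1/3}}$, and a union bound over at most $2K_0N$ pairs $(v,i)$. The padding alternative in your last paragraph is unnecessary, and for it to have any effect the padding would have to use elements of $Z$ rather than dummy elements outside it; it does not affect your main argument.
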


\begin{proof}
Choose uniformly at random a partition of $Z$ with the prescribed class
sizes. For fixed $v\in W$ and $i\in[s]$, the random variable
$X_{v,i}=|F(v)\cap Z_i|$ is hypergeometric and satisfies
$\mathbb EX_{v,i}\le\rho k_i$. Since the threshold in
\eqref{eq:prob} exceeds its expectation by at least $N^{2/3}$,
Hoeffding's inequality gives
$\mathbb P(X_{v,i}>2\rho k_i+N^{2/3})
\le\exp(-2N^{4/3}/k_i)\le\exp(-2N^{1/3})=o(N^{-2})$.
There are at most $2K_0N$ choices of $(v,i)$, so the assertion follows
from the union bound.
\end{proof}

\subsection{Extremal Case 1: nearly bipartite graphs}
\label{subsec:extremal1}

\begin{proof}[Proof of \cref{thm:main} in Extremal Case~1]
\mbox{}\par

We first construct a small $H$-subdivision in which every short
replacement path has its prescribed length and every long replacement
path contains a marked regular crossing edge. The corrections inside
$A$ and $B$ are chosen so that the subdivision has the same imbalance
as the partition $A\cup B$. The unused vertices in the two parts
therefore have equal size and can be distributed among the long
replacement paths, which are then completed through their marked
crossing edges.

\medskip
\noindent\textbf{Step 1. Cleaning the nearly bipartite graph.}
\medskip

Choose $A_0\subseteq V(G)$ such that
$|A_0|\ge n/2-\varepsilon n$ and $e(A_0)\le\varepsilon n^2$.
We first obtain a balanced bipartition of $V(G)$. If
$|A_0|\ge\floor{n/2}$, choose
$X\subseteq A_0$ with $|X|=\floor{n/2}$. Otherwise, enlarge $A_0$
to a set $X$ of order $\floor{n/2}$. In the latter case, at most
$\varepsilon n$ vertices are added, and hence
$e(X)\le e(A_0)+\varepsilon n^2\le2\varepsilon n^2$.
Thus $e(X)\le2\varepsilon n^2$ in either case. Let
$Y=V(G)\setminus X$, so $|Y|=\ceil{n/2}$.

Since $\delta(G)\ge |Y|+k$, we have $e(X,Y)=\sum_{x\in X}d_G(x)-2e(X) \ge |X|(|Y|+k)-4\varepsilon n^2$, and hence
\begin{equation}\label{eq:case1missing}
 \ole(X,Y)\le4\varepsilon n^2.
\end{equation}

For $c\in\{1,10\}$, define $X_c=\{x\in X:\oldeg_Y(x)>c\eta n\}$ and
$Y_c=\{y\in Y:\oldeg_X(y)>c\eta n\}$. Let $E=X_1\cup Y_1$, $s=|E|$, and
$s_0=|X_{10}|+|Y_{10}|$. Since each missing pair between $X$ and $Y$ is counted once from each side, $c\eta n(|X_c|+|Y_c|)<2\ole(X,Y)$. It follows from
\eqref{eq:case1missing} that
\begin{equation}\label{eq:case1exceptional}
 s\le\frac{8\varepsilon}{\eta}n,
 \qquad
 s_0\le\frac{4\varepsilon}{5\eta}n.
\end{equation}
In particular, $s,s_0\le\eta^2n$.

Move the vertices of $X_{10}$ to $Y$ and those of $Y_{10}$ to $X$.
Denote the resulting parts by $A$ and $B$, interchanging their names
if necessary so that $a:=|A|\le |B|=:b$, and write $r=b-a$. Since
the original part sizes differ by at most one and each moved vertex
changes their difference by two,
\begin{equation}\label{eq:case1r}
 0\le r\le2s_0+1\le3\eta^2n.
\end{equation}

Call the vertices of $E$ \emph{exceptional} and all other vertices regular,
and write $\Reg=V(G)\setminus E$. Every regular vertex remains in its
original part and has at most
$\eta n+s_0\le2\eta n$ non-neighbors in the opposite new part.

We next consider the exceptional vertices. If $x\in X_{10}$, then
$d_Y(x)<|Y|-10\eta n$, and therefore
$d_X(x)\ge\delta(G)-d_Y(x)>10\eta n+k$. After moving $x$ to the
other part, it has at least
$d_X(x)-|X_{10}|\ge8\eta n$ neighbors in the opposite part.
If $x\in X_1\setminus X_{10}$, then $x$ is not moved and retains at
least $|Y|-10\eta n-|Y_{10}|\ge8\eta n$ neighbors in the opposite
part. The same argument applies to vertices originally in $Y$.

Consequently, every exceptional vertex has at least $8\eta n$
neighbors in the opposite part. Since $|E|\le\eta^2n$, every vertex
has at least $7\eta n$ regular neighbors in the opposite part.

The new partition remains almost complete bipartite. Moving $s_0$
vertices changes the crossing status of at most $s_0n$ pairs, so
\eqref{eq:case1missing} and
\eqref{eq:case1exceptional} give
\begin{equation}\label{eq:case1missingedges}
 \ole(A,B)
 \le4\varepsilon n^2+s_0n
 \le\frac{5\varepsilon}{\eta}n^2
 =:\zeta n^2,
 \qquad
 \zeta\ll\eta^2.
\end{equation}

We also record lower bounds on the degrees inside the two parts. For
$v\in B$ and $u\in A$, we have
$d_B(v)\ge\delta(G)-a$ and $d_A(u)\ge\delta(G)-b$. Since
$a+b=n$, $b-a=r$, and $\delta(G)\ge n/2+k$, the integrality of the
degrees gives
\begin{equation}\label{eq:case1degree}
 \delta(G[B])\ge k+\ceil{r/2},
 \qquad
 \delta(G[A])\ge\max\{0,k-\floor{r/2}\}.
\end{equation}

We shall repeatedly use two consequences of these estimates. First,
after excluding at most $\eta n$ previously used vertices, every set of
at most $h$ regular vertices in one part has a fresh regular common
neighbor in the other. Indeed, each such vertex has at most
$2\eta n$ non-neighbors across the partition, while both parts have
order $n/2+o(n)$ and $|E|\le\eta^2n$.

Second, if $U\subseteq A\cap\Reg$ and
$W\subseteq B\cap\Reg$ satisfy $|U|,|W|\ge6\eta n$, then
$G[U,W]$ contains an edge. Otherwise, every pair in $U\times W$
would be missing, and \eqref{eq:case1missingedges} would imply
$\zeta n^2\ge|U||W|\ge36\eta^2n^2$, contradicting
$\zeta\ll\eta^2$.

\medskip
\noindent\textbf{Step 2. Choosing the branch coloring and the correction scheme.}
\medskip

There is at least one long edge. Indeed, otherwise
$\sum_{i=1}^h\ell_i<h\alpha n$, whereas
$\sum_{i=1}^h\ell_i=n-|V(H)|+h$, a contradiction for sufficiently
large $n$. Fix a long edge $e_j$. For each $i\in[h]$, let
$p_i\in\{0,1\}$ satisfy $p_i\equiv\ell_i\pmod2$.

A \emph{correction} for $e_i$ in a side $Z\in\{A,B\}$ is an edge
of the path replacing $e_i$ that lies entirely in $Z$.
Corrections sharing a branch vertex form a \emph{fan port}.
A correction not attached to a branch vertex forms a \emph{single port},
while two corrections assigned to $e_j$ form a \emph{pair port}.
The \emph{demand} of a port is its number of corrections.

For a branch coloring $f:V(H)\to\{A,B\}$, let $\chi_i=1$ if the
ends of $e_i$ receive different colors and let $\chi_i=0$ otherwise. Write $D:=D_f$, let $t_i$ denote the number of corrections assigned to $e_i$, and let $t_A$ and $t_B$ denote the total numbers of corrections assigned to $A$ and $B$, respectively. We shall choose $f$ and the correction scheme so that
\begin{equation}\label{eq:case1correction}
 t_i\equiv p_i-\chi_i\pmod2 \text{for every }i\in[h],\qquad t_B-t_A=r+D.
\end{equation}
The first condition gives the required parity of each replacement path,
while the second gives the required balance between the two parts.

Define
\begin{equation}\label{eq:case1dAB}
 d_A:=\max\{0,k-\floor{r/2}\},
 \qquad
 d_B:=k+\ceil{r/2}.
\end{equation}
We shall use at most $d_A$ ports in $A$ and at most $d_B$ ports in
$B$, each of demand at most $h$.

We shall also use the following parity relation. Let $z=z_p(f)$ be
the number of defects. Since $\sum_i\chi_i=e_H(U_A,U_B)$, we have
$z\equiv\sum_i(p_i-\chi_i) \equiv\sum_i p_i-e_H(U_A,U_B)\pmod2$.
Moreover, $\sum_i p_i\equiv n-v(H)+h\pmod2$, while, by the definition of $D$,
$D\equiv v(H)+h-e_H(U_A,U_B)\pmod2$. Finally, $r\equiv n\pmod2$, since the two parts have orders $(n-r)/2$ and $(n+r)/2$. Hence $z\equiv r+D\pmod2$.

\smallskip
\noindent\textbf{Case 1: $r<2h$.}
Construct a subdivision $K$ of $H$ by leaving $e_i$ unsubdivided when
$p_i=1$ and subdividing it once when $p_i=0$. Thus the path of $K$
corresponding to $e_i$ has length congruent to $p_i$ modulo $2$.

Let $\nu$ be the largest integer such that $K$ contains two disjoint
independent sets of order $\nu$. Then
$\operatorname{bal}(K)=v(K)-2\nu$. Indeed, choose
$T\subseteq V(K)$ and a bipartition $(P,Q)$ of $K-T$ such that
$\bigl||P|-|Q|\bigr|=\disc(K-T)$. Then $|T|+\disc(K-T) =v(K)-2\min\{|P|,|Q|\}$,
and $P$ and $Q$ contain disjoint independent sets of common order
$\min\{|P|,|Q|\}$. Conversely, two disjoint independent sets of common
order $q$ induce a balanced bipartite subgraph of $K$ on $2q$ vertices.

Write $\lambda=\operatorname{bal}(K)$. By \cref{correction}\textup{(i)},
$\lambda\le\floor{2h/3}$. Furthermore, $v(K)=v(H)+h-\sum_i p_i\equiv n\equiv r\pmod2$, and hence $\lambda\equiv r\pmod2$. It follows that
$\lambda\le2k+\1\{r\text{ is odd}\}$.

Choose disjoint independent sets $I_A,I_B\subseteq V(K)$ of order
$\nu$. If one of them can be enlarged while remaining disjoint from
the other, interchange their names if necessary so that $I_B$ can be
enlarged. Extend $I_B$ to a maximal independent set $I'_B$ of
$K-I_A$. Let $J=I'_B\setminus I_B$, $q=|J|$, and
$S=V(K)\setminus(I_A\cup I'_B)$. Since $v(K)=2\nu+\lambda$, we have $|S|=\lambda-q$.

Every vertex of $S$ has a neighbor in $I'_B$ by maximality. It also
has a neighbor in $I_A$. Indeed, if $q>0$ and some $v\in S$ had no
neighbor in $I_A$, then $I_A\cup\{v\}$ and a $(\nu+1)$-subset of
$I'_B$ would be disjoint independent sets of order $\nu+1$, contrary
to the definition of $\nu$. If $q=0$, neither $I_A$ nor $I_B$ is
extendable.

For an integer $x$ to be chosen below, color exactly $x$ vertices of
$S$ with $A$, color the remaining vertices of $S$ with $B$, and
color $I_A$ and $I'_B$ with $A$ and $B$, respectively. Let $f$ be
the restriction of this coloring to $V(H)$. Since $I_A$ and $I'_B$
are independent, every monochromatic edge of $K$ has an endpoint in
$S$. Assign each such edge to one of its endpoints in $S$.

If the selected endpoint is a branch vertex, assign the corresponding
correction to the fan port centered at that vertex. If the selected
endpoint is a subdivision vertex, then its two neighbors lie one in
$I_A$ and one in $I'_B$. Hence exactly one of its incident edges is
monochromatic, and we assign the corresponding correction to the fan
port at the appropriate branch end. The only way that both edges replacing some subdivided $e_i$ are monochromatic is that its subdivision vertex lies in
$I_A\cup I'_B$ and both branch ends have the same color as that
vertex. In this case, $e_i$ receives one fan correction at each branch
end, and the two corrections lie in the same side.

Before adding pair ports, there are at most $x$ ports in $A$ and at most $|S|-x$ ports in $B$. By \eqref{eq:balance}, the difference between the two color
classes of $K$ is $D+t_A-t_B=2x-\lambda$. Let $y_A$ and $y_B$ denote the numbers of pair ports added in $A$ and $B$, respectively. Since $\lambda\equiv r\pmod2$, all the quantities below are integers.

\begin{enumerate}[label=\textup{(\roman*)},leftmargin=*]
\item If $r>\lambda$, choose $x=0$, $y_A=0$, and
$y_B=(r-\lambda)/2$. The numbers of ports in $A$ and $B$ are then at
most $0$ and $(\lambda+r)/2-q$, respectively.

\item If $r\le\lambda$ and $q\le(\lambda+r)/2$, choose
$x=(\lambda-r)/2$ and $y_A=y_B=0$. The numbers of ports are at most
$(\lambda-r)/2$ in $A$ and $(\lambda+r)/2-q$ in $B$.

\item If $r\le\lambda$ and $q>(\lambda+r)/2$, choose
$x=|S|=\lambda-q$, $y_A=q-(\lambda+r)/2$, and $y_B=0$. The numbers
of ports are at most $(\lambda-r)/2$ in $A$ and $0$ in $B$.
\end{enumerate}

In all three cases, $0\le x\le|S|$ and $y_A,y_B\ge0$, and $2x-\lambda+2y_A-2y_B=-r$. Thus the second relation in \eqref{eq:case1correction} holds.
Furthermore, since
$\lambda\le2k+\1\{r\text{ is odd}\}$ and $\lambda\equiv r\pmod2$,
we have $(\lambda+r)/2\le d_B$ and $(\lambda-r)/2\le d_A$ whenever $r\le\lambda$. Hence the numbers of ports do not exceed the bounds in
\eqref{eq:case1dAB}.

For each $e_i$, the number of monochromatic edges on its corresponding
path in $K$ is congruent to $p_i-\chi_i$ modulo $2$. Adding pair ports
changes the number of corrections on $e_j$ by an even number, so the
first relation in \eqref{eq:case1correction} also holds. Every fan
port has demand at most $d_H(v)\le h$, while every pair port has demand
two. Finally, each $e_i\ne e_j$ receives at most one correction, except
possibly one fan correction at each branch end as described above.

\smallskip
\noindent\textbf{Case 2: $r\ge2h$.}
By \cref{correction}\textup{(ii)}, choose a branch
coloring $f$ such that $z+D\le2k$, where $z=z_p(f)$ and $D=D_f$. Moreover,
$|D|\le\frac12\sum_{v\in V(H)}|2-d_H(v)|\le h$. Since $z\le h$, we
have $z-D\le2h\le r$.

We place all corrections in $B$. Set
$t_A=0$ and $t_B=r+D$. Then $t_B\ge z$, and the parity relation above
gives $t_B\equiv z\pmod2$. Furthermore,
\begin{equation}\label{eq:case1bound}
 t_B+z=r+(D+z)
 \le r+2k
 \le2\bigl(k+\ceil{r/2}\bigr)
 =2d_B.
\end{equation}

Let $z_0$ be the number of defect edges different from $e_j$. Assign one
single port to each of these edges and assign the remaining
$t_B-z_0$ corrections to $e_j$. Since $t_B\equiv z\pmod2$, the integer $t_B-z_0$ is odd exactly when $e_j$ is a defect.  Hence these corrections can be grouped into pair
ports, together with one single port when $e_j$ is a defect. The total
number of ports in $B$ is $z_0+\ceil{(t_B-z_0)/2}=(t_B+z)/2\le d_B$, and no port is used in $A$.

Combining the two cases, the correction scheme satisfies \eqref{eq:case1correction} and uses at most $d_A$ ports in $A$ and at most $d_B$ ports in $B$. Every port has demand at most $h$, and the total demand is $O_h(r+1)$.

\medskip
\noindent\textbf{Step 3. Realizing the corrections inside the two parts.}
\medskip

For $Z\in\{A,B\}$, let $\overline Z$ denote the other part, and let
$d_Z$ be defined by \eqref{eq:case1dAB}. Define $Q_Z$ to be the graph
obtained from $G[Z]$ by deleting all edges with two exceptional
endpoints. Then \eqref{eq:case1degree} gives
$d_{Q_Z}(v)\ge d_Z$ for every $v\in Z\cap\Reg$.

\begin{claim}\label{matchingfan}
Suppose that $r<2h$, and let $Z\in\{A,B\}$ with $d_Z>0$. Then one of
the following holds.
\begin{enumerate}[label=\textup{(\alph*)},leftmargin=*]
\item $Q_Z$ contains a matching of size $(2h+4)d_Z$.
\item There are disjoint sets $C_Z,L_Z\subseteq Z$ such that
$|C_Z|=d_Z$, $L_Z\subseteq\Reg$, $|L_Z|=\Omega_h(n)$, and
$C_Z\subseteq N_{Q_Z}(v)$ for every $v\in L_Z$.
\end{enumerate}
\end{claim}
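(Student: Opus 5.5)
We take a maximum matching $M$ in $Q_Z$. If $|M|\ge(2h+4)d_Z$ then (a) holds, so we assume $|M|<(2h+4)d_Z$ and aim for (b). Since $r<2h$, we have $d_Z\le k+h\le 2h$. Hence $V(M)$ has at most $2(2h+4)\cdot 2h=O_h(1)$ vertices. By maximality of $M$, the set $I:=Z\setminus V(M)$ is independent in $Q_Z$. So every edge of $Q_Z$ at a vertex of $I$ goes into $V(M)$.

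Next we locate many regular vertices in $I$. We have $|Z|\ge n/2-3\eta^2n$ by \eqref{eq:case1r}, and $|E|\le\eta^2 n$. Therefore $L':=I\cap\Reg$ satisfies $|L'|\ge n/2-2\eta^2n-O_h(1)=\Omega(n)$. Each $v\in L'$ has $d_{Q_Z}(v)\ge d_Z$ by \eqref{eq:case1degree} and the remark preceding the claim, and all these neighbours lie in $V(M)$. So $v$ has at least $d_Z$ neighbours in $V(M)$, a set of constant size.

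We then pigeonhole. Choose a $d_Z$-subset $C_v\subseteq N_{Q_Z}(v)\cap V(M)$ for each $v\in L'$. There are at most $\binom{|V(M)|}{d_Z}=O_h(1)$ possible subsets. So some fixed set $C_Z$ equals $C_v$ for at least $|L'|/\binom{|V(M)|}{d_Z}=\Omega_h(n)$ vertices $v$. Let $L_Z$ be the set of these vertices. Then $L_Z\subseteq I$ and $C_Z\subseteq V(M)$, so the two sets are disjoint. Also $L_Z\subseteq\Reg$, $|C_Z|=d_Z$, and every $v\in L_Z$ is adjacent in $Q_Z$ to all of $C_Z$, which is (b).

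There is essentially no obstacle here. The one point needing care is the degree bound. The bound $d_{Q_Z}(v)\ge d_Z$ holds for regular $v$, because deleting edges with two exceptional endpoints does not affect the edges at $v$. This is why $L_Z$ is taken inside $\Reg$. The quantitative choice $(2h+4)d_Z$ only needs to be $O_h(1)$ for the pigeonhole step. The constant in $\Omega_h(n)$ depends on $h$ through $\binom{2(2h+4)2h}{2h}$.
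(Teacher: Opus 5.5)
Your proof is correct and is essentially the paper's argument: take a maximal matching (you use a maximum one), whose $O_h(1)$ endpoints form a vertex cover of $Q_Z$; observe that every regular vertex of $Z$ outside this cover has at least $d_Z$ neighbours in $Q_Z$, all lying in the cover; then pigeonhole over the $O_h(1)$ many $d_Z$-subsets of the cover to obtain $C_Z$ and $L_Z$. Your explicit bound $d_Z\le k+h\le 2h$, derived from $r<2h$, just makes precise the $O_h(1)$ bound that the paper leaves implicit.
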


\begin{proof}[Proof of the claim]
Suppose that \textup{(a)} fails, and let $M_Z$ be a maximal matching
in $Q_Z$. Its set of endpoints $W_Z$ is a vertex cover and satisfies
$|W_Z|<2(2h+4)d_Z=O_h(1)$. Every vertex of
$(Z\cap\Reg)\setminus W_Z$ has at least $d_Z$ neighbors in $W_Z$.
Since $(Z\cap\Reg)\setminus W_Z$ has linear order and $W_Z$ has only
$O_h(1)$ subsets of order $d_Z$, some $d_Z$-set
$C_Z\subseteq W_Z$ is contained in the neighborhoods of
$\Omega_h(n)$ vertices of $(Z\cap\Reg)\setminus W_Z$. Taking these
vertices as $L_Z$ proves \textup{(b)}.
\end{proof}

If \textup{(a)} holds, call $Z$ a \emph{matching side} and fix a
matching of size $(2h+4)d_Z$. Otherwise, call $Z$ a \emph{fan side}
and fix sets $C_Z$ and $L_Z$ as in \textup{(b)}.

\begin{claim}
Suppose that $r<2h$. The correction scheme may be modified, while
preserving \eqref{eq:case1correction}, so that every
$e_i\ne e_j$ has one of the following forms:
\begin{enumerate}[label=\textup{(\alph*)},leftmargin=*]
\item no correction;
\item one fan correction at one branch end;
\item one detached correction in a matching side, with both branch
images free;
\item one fan correction at each branch end, with both corrections
lying in the same side.
\end{enumerate}
Moreover, no new fan port is created, and every fan port has demand at
most $h$.
\end{claim}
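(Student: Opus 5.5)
Starting from the scheme produced in Case~1 of Step~2, I would modify it one edge $e_i\ne e_j$ at a time, making only local changes. Each change moves corrections to or from $e_j$, or moves a correction within one side, so that for every $i$ the parity $t_i\equiv p_i-\chi_i\pmod 2$ and the difference $t_B-t_A$ stay the same. From Step~2 we already know that every $e_i\ne e_j$ carries at most one correction, apart from the exceptional configuration with one fan correction at each branch end in the same side. That configuration is exactly form (d), so nothing needs to be done for it. An edge with no correction is form (a). It remains to handle an edge with exactly one correction.

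If that correction is a fan correction at a branch end, it is already form (b). The only other possibility is that the correction is detached from the branch vertices. In the Case~1 construction this happens only when the chosen endpoint in $S$ is a subdivision vertex, and we then attached the correction to a fan port at a branch end, so in fact it is still form (b). The remaining task is to make sure the correction's side $Z$ can realize it. If $Z$ is a fan side, a fan correction at a branch vertex mapped into $C_Z$ or $L_Z$ is fine. If $Z$ is a matching side and the fan correction cannot be realized, I would convert it into a detached correction on a matching edge of $Q_Z$, which gives form (c). This requires both branch images of $e_i$ to be free, meaning they are not used as fan centers. When a branch end is a fan center, I keep the fan correction instead. Neither conversion creates a new fan port. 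Demands only decrease, or stay within $d_H(v)\le h$.

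A few corrections on the same side may be redundant. If a pair of corrections on $e_i$ is not of type (d), they lie in the same side and have total parity $0$, so I delete both and add a pair port of two corrections to $e_j$ in that side. This keeps $t_A$, $t_B$ and all parities unchanged. The concern is the port count $d_Z$. Pair ports are allowed under the demand bound, and the number of ports does not increase because we remove at least one port for each one we add. If two corrections on $e_i$ lie in different sides, $t_B-t_A$ is unaffected only if they are moved together, so I would move both to $e_j$ keeping their sides, which uses the same number of ports.

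I expect the main obstacle to be the bookkeeping: showing that in every case the number of ports in each side stays at most $d_A$ and $d_B$, and that fan demands remain at most $h$. I would argue this by making each modification remove a correction from an existing port and either discard it or reassign it to an existing or equal-count port. I would also check that $e_j$ being long leaves it free to absorb an arbitrary even number of extra corrections.
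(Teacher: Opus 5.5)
There is a genuine gap. As you observe, the unmodified scheme from Case~1 of Step~2 already has only forms (a), (b) and (d). The substance of the claim, and what Step~3 relies on, is therefore that after normalization \emph{no fan correction lies in a matching side}. On a matching side $Z$ you only know that $Q_Z$ has a matching of size $(2h+4)d_Z$ and that regular vertices have degree at least $d_Z$ in $Q_Z$. That gives no vertex able to carry a fan of demand up to $h$ (e.g.\ $G[Z]$ could be a disjoint union of copies of $K_{d_Z+1}$).

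Your proposal leaves fans in matching sides in two places.
\begin{itemize}
\item You declare the two-correction configuration finished because it ``is exactly form (d)''. If its side is a matching side, both corrections must be detached, and two detached corrections on $e_i$ is not an allowed form. The paper moves both to $e_j$ in the same side; this changes $t_i$ and $t_j$ by $2$ and leaves $t_A,t_B$ unchanged.
\item More seriously, when $e_i$ has a single correction in a matching side $Z$ and the other branch end $w$ is a fan center, you ``keep the fan correction''. That correction cannot be realized in $Z$. Detaching it instead gives a detached correction with one branch image already fixed in $C_{\overline Z}$. The constructions of Step~4 do not accommodate this, since they need both branch images to be chosen afterwards as common neighbours of prescribed vertices.
\end{itemize}

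The missing idea is a cross-side exchange, which your restriction to moves ``within one side'' rules out. After detaching everything on matching sides, such a $w$ must be colored $\overline Z$, with $\overline Z$ a fan side. Replace the detached correction on $e_i$ by a new correction of $e_i$ in the existing fan at $w$, and add two corrections to $e_j$ in $Z$. Then $t_i$ is unchanged, $t_j$ grows by $2$, and $t_Z$ and $t_{\overline Z}$ each grow by one, so \eqref{eq:case1correction} is preserved. Since $e_i$ was not previously represented in the fan at $w$, no port is created and the fan's demand stays at most $d_H(w)\le h$.

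The bookkeeping also differs by side. On fan sides the relevant count is the number of ports, which is unchanged. On matching sides it is the number of corrections, which rises by at most one per exchange and so stays at most $hd_Z+h\le(2h+4)d_Z$. Your argument that ``we remove at least one port for each one we add'' is not justified as stated, since removing corrections from a fan need not remove the port. It only concerns configurations that do not arise from Step~2, though.
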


\begin{proof}[Proof of the claim]
Detach every correction lying in a matching side from its branch
center, and consider $e_i\ne e_j$.

If both endpoint corrections are detached, transfer both to $e_j$ in
the same side. If exactly one correction is detached and the other
branch end is not fixed as a fan center, leave the detached correction
on $e_i$.

It remains to consider the case in which exactly one correction is
detached, say in the matching side $Z$, while the other branch end is
already the center of a fan in $\overline Z$. Replace the detached
correction on $e_i$ by a correction in this fan, and add two
corrections on $e_j$ in $Z$.

The first modification leaves the side totals unchanged, removes two
corrections from $e_i$, and adds two corrections to $e_j$. In the
second modification, the numbers of corrections in $Z$ and
$\overline Z$ both increase by one, the number assigned to $e_i$ is
unchanged, whereas the number assigned to $e_j$ increases by two.
Hence \eqref{eq:case1correction} is preserved.

In the second modification, the arm $e_i$ was not previously
represented in the fan to which its correction is added. Thus no new
fan port is created, and a fan centered at a branch vertex $v$ has
demand at most $d_H(v)\le h$. The four stated forms follow.
\end{proof}

\smallskip
\noindent\textbf{The case $r<2h$.}
We now realize the normalized corrections. Let $Z$ be a matching side.
Before normalization, at most $d_Z$ ports are assigned to $Z$, each of
demand at most $h$, so there are at most $hd_Z$ corrections in $Z$.
Each modification above increases the number of corrections in a
matching side by at most one, and at most $h$ such modifications are
performed. Since $d_Z\ge1$, the number of corrections in $Z$ is at
most $hd_Z+h\le(2h+4)d_Z$.

Assign a distinct edge of the fixed matching in $Q_Z$ to each
correction. In particular, the two corrections of a pair port are
represented by two distinct matching edges. Every chosen edge has a
regular endpoint, since $Q_Z$ contains no edge with two exceptional
endpoints.

Now let $Z$ be a fan side. The normalization creates no new fan port,
so at most $d_Z$ ports are assigned to $Z$. Assign a distinct vertex
of $C_Z$ to each port. For a fan port, use the assigned vertex as the
image of its branch center and choose a distinct leaf from $L_Z$ for
each correction in the fan. For a pair port, choose two distinct
leaves from $L_Z$ and join them by the two-edge path through its
assigned vertex of $C_Z$. Since $|L_Z|=\Omega_h(n)$ and the total
demand is $O_h(1)$, all leaves can be chosen distinct.

Thus the resulting correction pieces are pairwise vertex-disjoint,
except that pieces belonging to the same fan port share their
prescribed branch image.

\smallskip
\noindent\textbf{The case $r\ge2h$.}
Let $d=d_B$. Every regular vertex has degree at least $d$ in $Q_B$.
Moreover, Step~1 and the hierarchy give
$|B\setminus E|>6d$.

Let $z_0$ be the number of defects other than $e_j$. By
\eqref{eq:case1bound},
$t_B+z_0\le t_B+z\le2d$. Applying
\cref{linearforest} to $Q_B$, with
$R=B\setminus E$ and $q=t_B+z_0$, yields a linear forest
$F_B\subseteq Q_B$ with exactly $t_B+z_0$ edges.

Since $t_B\ge z\ge z_0$, we have $e(F_B)\ge2z_0$. Traverse the path
components of $F_B$ from an end, selecting the first edge and then
every second edge until $z_0$ edges have been selected. Delete the
edge immediately following each selected edge whenever it exists.
The selected edges are pairwise vertex-disjoint and are not incident
with any edge that remains.

At most $2z_0$ edges have been selected or deleted, so at least
$t_B-z_0$ edges remain. Delete end edges from the remaining path
components until exactly $t_B-z_0$ edges remain. Assign one selected
edge to each defect other than $e_j$, and assign every component of
the remaining linear forest to $e_j$. These correction pieces are
pairwise vertex-disjoint, and every selected edge has a regular
endpoint.

Finally, choose the image of every branch vertex that is neither
already fixed as a fan center nor incident with a detached correction
on some $e_i\ne e_j$ as a fresh regular vertex in its prescribed
part. The images of the remaining branch vertices will be chosen in
Step~4 after the relevant adjacency constraints have been specified.
Thus all correction pieces are pairwise vertex-disjoint, except at
their prescribed common branch images.

\medskip
\noindent\textbf{Step 4. Embedding the paths corresponding to $e_i\ne e_j$.}
\medskip

For $X\in\{A,B\}$, write $\overline X$ for the other part. For each
$i\in[h]\setminus\{j\}$, we construct a path $R_i$ of length four or
five corresponding to $e_i$, choosing any branch images not fixed in
Step~3 during the construction. The edges of $R_i$ lying within $A$
or $B$ are exactly the corrections assigned to $e_i$.

\smallskip
\noindent\textbf{Detached corrections.}

Let $xy\in E(G[X])$ be a detached correction, where $X\in\{A,B\}$, and label its ends so that $y$ is regular. By the normalization in Step~3, the images of both ends of $e_i$ have not yet been chosen.

Suppose first that the ends of $e_i$ are colored $X$ and
$\overline X$, respectively. Choose a fresh regular vertex
$c\in N_{\overline X}(x)$. Require the branch image $u\in X$ to be
adjacent to $c$ and the branch image $v\in\overline X$ to be adjacent
to $y$. Once $u$ and $v$ have been chosen, use the path $(u,c,x,y,v)$. The reverse coloring is handled symmetrically.

If both ends of $e_i$ are colored $X$, choose distinct fresh regular
vertices $c\in N_{\overline X}(x)$ and $d\in N_{\overline X}(y)$.
Require the two branch images $u,v\in X$ to be adjacent to $c$ and
$d$, respectively, and use the path $(u,c,x,y,d,v)$.

If both ends of $e_i$ are colored $\overline X$, choose a fresh regular vertex $c\in N_{\overline X}(x)$ and then a fresh regular vertex $z\in N_X(c)$. Require the two branch images $u,v\in\overline X$ to be adjacent to $y$ and $z$, respectively, and use the path $(u,y,x,c,z,v)$.

Process all detached corrections before choosing the remaining branch
images. Each resulting constraint is a regular vertex in the part
opposite to the prescribed branch image. A branch vertex receives at
most one constraint from each incident edge, and hence at most
$d_H(v)\le h$ constraints. Since only $O_h(r+1)\ll\eta n$ vertices are used, 
the common neighbor property from Step~1 allows the remaining branch vertices to be embedded successively as distinct fresh regular common neighbors satisfying all their
constraints.

\smallskip
\noindent\textbf{Retained fan corrections.}

Suppose first that $e_i$ has one retained fan correction
$ux\in E(G[X])$, where $u$ is the branch image at the center of the
fan and $x$ is its regular leaf. Let $v$ be the other branch image.

If $v\in\overline X$, let $T$ be the set of vertices already used
outside the current correction and its branch images. Then
$U=(N_{\overline X}(x)\cap\Reg)\setminus T$ and  $W=(N_X(v)\cap\Reg)\setminus T$
have order at least $6\eta n$. By the second consequence of Step~1,
there is an edge $yz$ with $y\in U$ and $z\in W$. Use the path
$(u,x,y,z,v)$.

If $v\in X$, choose distinct fresh regular vertices $y\in N_{\overline X}(x)$ and
$w\in N_{\overline X}(v)$, and then a fresh regular common neighbor
$z\in X$ of $y$ and $w$. Use the path $(u,x,y,z,w,v)$. The case in which the fan correction lies at the other branch end is symmetric.

Finally, suppose that $e_i$ has a retained fan correction at each
branch end. By Step~3, both corrections lie in the same part, say
$X$. Write them as $ux$ and $vy$, where $u,v$ are the branch images
and $x,y$ are the regular leaves. Choose a fresh regular common
neighbor $z\in\overline X$ of $x$ and $y$, and use the path
$(u,x,z,y,v)$.

In each case, the edges of the resulting path lying inside $A$ or $B$
are precisely the retained fan corrections assigned to $e_i$, and the
path contains a crossing edge whose endpoints are regular.

\smallskip
\noindent \textbf{Edges with no correction.}

Suppose that $e_i$ has no correction, and let $u$ and $v$ be its
branch images. If $u,v\in X$, choose distinct fresh regular vertices
$y\in N_{\overline X}(u)$ and $w\in N_{\overline X}(v)$, and then a fresh regular common neighbor $z\in X$ of $y$ and $w$. Use the path $(u,y,z,w,v)$. If $u\in A$ and $v\in B$, choose fresh regular vertices $y\in N_B(u)$ and $z\in N_A(v)$. Next choose a fresh regular vertex $a\in N_A(y)$ and a fresh regular common neighbor $b\in N_B(a)\cap N_B(z)$. Use the path $(u,y,a,b,z,v)$. The case $u\in B$ and $v\in A$ is symmetric.

In Step~4, only $O_h(1)$ auxiliary vertices are added to the correction pieces fixed in Step~3. Since the latter use $O_h(r+s+1)\ll\eta n$ vertices, all choices above can be made fresh. Thus the paths $R_i$, $i\ne j$, are internally vertex-disjoint and
otherwise meet only at common branch images. Each $R_i$ contains a crossing edge whose two endpoints are regular.

By construction, exactly $t_i$ edges of $R_i$ lie within $A$ or $B$. Hence, by \eqref{eq:path} and \eqref{eq:case1correction}, $|E(R_i)|\equiv\chi_i+t_i\equiv p_i\equiv\ell_i\pmod2$. Since $|E(R_i)|\in\{4,5\}$, the path $R_i$ has length four when
$\ell_i$ is even and length five when $\ell_i$ is odd.

\medskip
\noindent\textbf{Step 5. Constructing $R_j$ and completing the short paths.}
\medskip

The paths $R_i$ for $i\ne j$ constructed in Step~4 are now fixed.
We first construct $R_j$.

At each end of $e_j$, if the corresponding correction is retained in
a fan, take the edge joining the branch image to the regular leaf
assigned to $e_j$ as an endpoint piece, with the branch image as its first vertex. Otherwise, take the branch image as a singleton endpoint
piece. Include every remaining correction path assigned to $e_j$ and
every exceptional vertex not yet used as a singleton piece.
Fix an ordering of the vertices along each nontrivial internal piece,
and order the internal pieces between the two endpoint pieces.

By Step~2, the total demand is $O_h(r+1)$, and the realization in
Step~3 therefore uses $O_h(r+1)$ vertices. Hence all the pieces above
together contain at most $C_h(r+s+1)$ vertices, for some constant
$C_h$. By \eqref{eq:case1exceptional}, \eqref{eq:case1r}, and the
hierarchy, we may assume that $C_h(r+s+1)\le\eta n/100$.

\smallskip
\noindent\textbf{Connecting the pieces.}
By Step~1, every vertex, whether regular or exceptional, has at least
$7\eta n$ regular neighbors in the opposite part.
We use the following connection property. Let $x$ and $y$ be exposed
ends of two vertex-disjoint pieces, and let $F$ be a set of at most
$\eta n/5$ forbidden vertices disjoint from $\{x,y\}$. Then $x$ and
$y$ can be joined by a path of length three if they lie in different
parts, and by a path of length four if they lie in the same part, such
that all internal vertices are fresh and regular and every edge of the
path crosses between $A$ and $B$.

Suppose first that $x\in A$ and $y\in B$. The sets 
$U=(N_B(x)\cap\Reg)\setminus(F\cup\{y\})$  and $W=(N_A(y)\cap\Reg)\setminus(F\cup\{x\})$ have order at least $6\eta n$. By \eqref{eq:case1missingedges}, there is an edge $ba$ with $b\in U$ and $a\in W$. Thus $(x,b,a,y)$ is the required path. The case $x\in B$ and $y\in A$ is symmetric.

If $x,y\in A$, choose distinct regular vertices $b\in N_B(x)\setminus F$ and $c\in N_B(y)\setminus F$. By the common neighbor property from Step~1, $b$ and $c$ have a fresh regular common neighbor $a\in A\setminus(F\cup\{x,y\})$. Thus $(x,b,a,c,y)$ is the required path. The case $x,y\in B$ is symmetric.

Apply this connection property successively to consecutive pieces. At
each step, let $F$ contain all vertices already used by the paths
$R_i$ with $i\ne j$, all vertices of the pieces other than the two
exposed ends currently being joined, and all internal vertices of the
previous connectors. The number of pieces is $O_h(r+s+1)$, and each
connector uses at most three new vertices. Thus, by increasing
$C_h$ if necessary and using $C_h(r+s+1)\le\eta n/100$, we have 
$|F|<\eta n/5$ throughout.

The resulting path $R_j$ joins the prescribed branch images, contains
every correction assigned to $e_j$ and every exceptional vertex not
used earlier, and has no other edge lying inside $A$ or $B$. Moreover,
at least one connector is used, and each connector contains a crossing
edge with two regular endpoints. Fix one such edge as the marked edge
of $R_j$.

Exactly $t_j$ edges of $R_j$ lie within $A$ or $B$. Hence, by
\eqref{eq:path} and \eqref{eq:case1correction},
$|E(R_j)|\equiv\chi_j+t_j\equiv p_j\equiv\ell_j\pmod2$.
The number of pieces and connectors also gives
$|E(R_j)|\le\eta n/10<\alpha n/3\le\ell_j$.

For each $i\ne j$, fix a crossing edge of $R_i$ whose endpoints are
regular, as provided by Step~4. We next extend the paths corresponding
to the short edges.

\smallskip
\noindent \textbf{Extending the short paths.}

Let $xy$ be the marked edge of a path corresponding to a short edge,
where $x\in A\cap\Reg$ and $y\in B\cap\Reg$. Let $T$ be the set of
vertices currently used by all subdivision paths. Since the total
length of the short subdivision paths is at most $\beta n$,
$|E(R_j)|\le\eta n/10$, and the paths corresponding to the other long
edges have bounded length, throughout the extension process we have
$|T|\le\beta n+\eta n/10+O_h(1)<\eta n/4$.

Since $x$ is regular, $|(N_B(x)\cap\Reg)\setminus T| \ge |B|-2\eta n-|E|-|T|>n/4$.
Choose a fresh regular vertex $y'\in(N_B(x)\cap\Reg)\setminus T$. Since $y'$ and $y$ are regular, $|(N_A(y')\cap N_A(y)\cap\Reg)\setminus T|
\ge |A|-4\eta n-|E|-|T|>n/4$. Choose a fresh regular vertex $x'$ from this set. Replace the marked edge $xy$ by the path $(x,y',x',y)$, and take $y'x'$ as the new
marked edge.

This operation uses two new regular vertices, increases the length of
the path by two, and creates no edge lying within $A$ or $B$. For each
short edge $e_i$, the current path $R_i$ has the same parity as
$\ell_i$ and has length at most $\ell_i$. Repeating this operation
therefore extends $R_i$ to length exactly $\ell_i$.

For every long edge, leave the current path unchanged. Writing
$s_i=|E(R_i)|$, we obtain subdivision paths that are pairwise internally
vertex-disjoint and otherwise meet only at common branch images, and
\begin{equation}\label{eq:case1prepaths}
\begin{aligned}
 s_i&=\ell_i
   &&\text{if $e_i$ is short},\\
 s_i&\equiv\ell_i\pmod2
   &&\text{for every }i\in[h],\\
 s_i&\le\alpha n/3
   &&\text{if $e_i$ is long}.
\end{aligned}
\end{equation}
Every exceptional vertex lies on one of these paths, and every path
corresponding to a long edge contains a marked crossing edge whose
endpoints are regular.

\medskip
\noindent\textbf{Step 6. Completing the long paths.}
\medskip

Let $R=\bigcup_{i=1}^h R_i$. The normalization in Step~3 preserves
$t_B-t_A$, so \eqref{eq:case1correction} still gives
$t_B-t_A=r+D$. Hence, by \eqref{eq:balance},
$|A\cap V(R)|-|B\cap V(R)|=D+t_A-t_B=-r=a-b$.
Thus the unused sets $A^*=A\setminus V(R)$ and
$B^*=B\setminus V(R)$ have the same order, write
$N=|A^*|=|B^*|$.

Let $I$ be the set of indices corresponding to long edges. For each
$i\in I$, set $k_i=(\ell_i-s_i)/2$. By
\eqref{eq:case1prepaths}, $k_i$ is an integer and
$k_i\ge\alpha n/3$. Since
$|V(R)|=|V(H)|-h+\sum_{i=1}^h s_i$ and
$\sum_{i=1}^h\ell_i=n-v(H)+h$, while $s_i=\ell_i$ for every short
edge, we have $2N=n-|V(R)|=\sum_{i=1}^h(\ell_i-s_i)=2\sum_{i\in I}k_i$.
Consequently, $N=\sum_{i\in I}k_i$.

For each $i\in I$, label the endpoints of the marked edge of $R_i$
as $x_i$ and $y_i$, where $x_i\in A$ and $y_i\in B$. All vertices
of $A^*\cup B^*$, as well as $x_i$ and $y_i$, are regular. Hence
each relevant vertex has at most $2\eta n$ non-neighbors in the
opposite unused set.

Since $e_j$ is long, $N\ge k_j\ge\alpha n/3$. Also $N\le n$, and
hence $k_i\ge(\alpha/3)N$ for every $i\in I$, while
$2\eta n\le(6\eta/\alpha)N$. Since $|I|\le h$ and $N$ is sufficiently
large, \cref{allocation} applies, together with the hierarchy, to
give partitions $B^*=\cup_{i\in I}B_i$ and $A^*=\cup_{i\in I}A_i$,
where $|A_i|=|B_i|=k_i$, such that every relevant vertex has fewer
than $k_i/10$ non-neighbors in the corresponding part.

For each $i\in I$, consider the bipartite subgraph of $G$ with
classes $A_i\cup\{x_i\}$ and $B_i\cup\{y_i\}$. Each class has order
$k_i+1$, and every vertex has degree greater than $9k_i/10$.
Since $9k_i/10>(k_i+1)/2+1$ for sufficiently large $n$,
\cref{hpaths}\textup{(ii)} gives a Hamilton
$x_i$--$y_i$ path.

Replace the marked edge $x_iy_i$ of $R_i$ by this Hamilton path.
This increases its length by exactly $2k_i$ and uses precisely the
vertices of $A_i\cup B_i$. Doing this for every $i\in I$ gives
$|E(R_i)|=s_i+2k_i=\ell_i$ for every long edge $e_i$.
Together with Step~5, the resulting subdivision paths have the
prescribed lengths, are pairwise internally vertex-disjoint and
otherwise meet only at common branch images, and together cover every
vertex of $G$. Hence they form the required spanning
$H$-subdivision. This completes Extremal Case~1.
\end{proof}

\subsection{Extremal Case 2: two dense parts}
\label{subsec:extremal2}

\begin{proof}[Proof of \cref{thm:main} in Extremal Case~2]

\mbox{}\par
\medskip
\noindent\textbf{Step 1. Cleaning the two dense parts and finding the transition matching.}
\medskip

Choose $A_0\subseteq V(G)$ as in Extremal Case~2, and let
$B_0=V(G)\setminus A_0$. Define $D_A=\{a\in A_0:d_{B_0}(a)>\eta n\}$ and
$D_B=\{b\in B_0:d_{A_0}(b)>\eta n\}$. Since $e(A_0,B_0)\le\varepsilon n^2$,
\begin{equation}\label{eq:case2-bad}
|D_A|+|D_B|\le\frac{2\varepsilon}{\eta}n=:\rho n,
\qquad \rho\ll\eta.
\end{equation}

Move the vertices of $D_A\cup D_B$ to the opposite side, and set
$X=(A_0\setminus D_A)\cup D_B$ and $Y=(B_0\setminus D_B)\cup D_A$.
Let $C_X=A_0\setminus D_A$ and $C_Y=B_0\setminus D_B$ be the two
cores, and call the moved vertices exceptional. If $x\in C_X$, then
$d_{C_X}(x)\ge\delta(G)-\eta n-|D_A|$, and the analogous bound holds
in $C_Y$. Since $|A_0|,|B_0|=n/2\pm\varepsilon n$, the hierarchy gives
$|C_Z\setminus N(z)|\le3\eta n$ for every $z\in C_Z$ and $Z\in\{X,Y\}$.

If $z$ is exceptional in $Z$, then its definition before the move
gives
\begin{equation}\label{eq:case2exceptional}
d_{C_Z}(z)\ge
\eta n-|D_A|-|D_B|
\ge\eta n/2.
\end{equation}
In particular, $|X|,|Y|=n/2+o(n)$.

Let $Q$ be the bipartite graph obtained from $G[X,Y]$ by deleting all
edges whose two endpoints are exceptional. We claim that $Q$ contains
a matching of size $2k+2$. Set
$p_X=\delta(G)-|X|+1$ and $p_Y=\delta(G)-|Y|+1$.
Every vertex of $C_X$ has at least $p_X$ neighbors in $Y$, and every
vertex of $C_Y$ has at least $p_Y$ neighbors in $X$. Moreover,
$p_X+p_Y=2\delta(G)-n+2\ge2k+2$.

Suppose otherwise. By K\H{o}nig's theorem, $Q$ has a vertex cover
$W=W_X\cup W_Y$, where $W_X\subseteq X$, $W_Y\subseteq Y$, and
$|W|<2k+2$. Since both cores have linear order, we may choose
$x\in C_X\setminus W_X$ and $y\in C_Y\setminus W_Y$.
As $W$ is a vertex cover, all neighbors of $x$ in $Q$ lie in $W_Y$,
and all neighbors of $y$ lie in $W_X$. Hence
$|W_Y|\ge\max{0,p_X}$ and $|W_X|\ge\max{0,p_Y}$, so
$|W|\ge\max\{0,p_X\}+\max\{0,p_Y\}\ge p_X+p_Y\ge2k+2$,
a contradiction. Fix a matching $M_Q$ of size $2k+2$ in $Q$.

We next reserve two private core neighbors for each exceptional
vertex. For every $z\in D_B$, choose distinct
$a_z,b_z\in N_{C_X}(z)$, and for every $z\in D_A$, choose distinct
$a_z,b_z\in N_{C_Y}(z)$, so that all chosen vertices are distinct and
avoid $V(M_Q)$. This can be done greedily: by
\eqref{eq:case2exceptional}, every exceptional vertex has at least
$\eta n/2$ neighbors in the corresponding core, whereas
$2(|D_A|+|D_B|)+|V(M_Q)|\le2\rho n+4k+4\ll\eta n$.

Let $R_X\subseteq C_X$ and $R_Y\subseteq C_Y$ be the sets of reserved
vertices. Until Step~5, every further core vertex is chosen outside
$R_X\cup R_Y$. Since $|R_X|+|R_Y|=2(|D_A|+|D_B|)\le2\rho n\ll\eta n$, 
all degree and common neighborhood estimates used below remain valid.

\medskip
\noindent\textbf{Step 2. coloring the subdivision and embedding the transitions.}
\medskip

Let $F$ be the subdivision of $H$ on $n$ vertices obtained by replacing each edge $e_i$ with a path of length $\ell_i$. We shall color
$V(F)$ red and blue so that exactly $|X|$ vertices are red, red
vertices will be embedded in $X$ and blue vertices in $Y$.

There is a long edge $e^*=e_{i^*}$. Write $e^*=u^*v^*$ and let $J=H-e^*$. By
\cref{correction}\textup{(iii)}, there is an ordering
$\pi=(v_1,\ldots,v_{v(H)})$ such that, for every prefix $P$ of $\pi$,
\begin{equation}\label{eq:case2bound}
 e_J(P,V(H)\setminus P)
 +2-\1\{|\{u^*,v^*\}\cap P|=1\}\le2k+2.
\end{equation}

Leave the internal vertices of the path replacing $e^*$ uncolored
for now and use them later to adjust the sizes of the two color
classes. An edge whose endpoints have different colors is called a
\emph{transition edge}. For a path of length at least five with a
transition, we keep at least two internal vertices on each side of
the transition. If the path has length four, one side contains one
internal vertex and the other contains two; either of the two possible
positions may be used.

We construct the coloring as follows. Initially, color every vertex
outside the interior of the path replacing $e^*$ blue. Recolor the
branch vertices one at a time in the order $\pi$. Suppose that $v_j$
is the next branch vertex to be recolored red. For every short path
incident with $v_j$, if its other branch end is still blue, recolor
its internal vertices so that the path has exactly one transition
edge satisfying the condition above. If its other branch end is
already red, recolor the whole path red.

For every long path of $J$ whose other branch end is still blue,
recolor the first two internal vertices at $v_j$ red. If its other
branch end is already red, recolor its two remaining blue internal
vertices red. After these simultaneous recolorings, process the
newly separated long paths one at a time. On the current path, move
its transition toward the blue branch end by recoloring one internal
vertex at a time, stopping when exactly two blue internal vertices
remain at that end.

Record the initial coloring, the coloring after each simultaneous
recoloring, and the coloring after each one-vertex move. By
construction, in every recorded coloring the red branch vertices
form a prefix $P$ of $\pi$. A path corresponding to an edge of $J$
is monochromatic if its branch ends have the same color, and has
exactly one transition edge if its branch ends have different colors.
Moreover, at most one long path has a moving transition, while every
other transition on a long path lies within two edges of a branch end.

Moving a transition by one vertex increases the number of red vertices
by one. A simultaneous recoloring when a branch vertex becomes red
changes this number by at most
$1+2h+\sum_{\ell_i<\alpha n}\ell_i\le\beta n+3h$.
Thus the recorded red counts outside the interior of the path replacing
$e^*$ increase from $0$ to $n-M$, where $M=\ell_{i^*}-1$.

Consider a recorded coloring whose red branch vertices form a prefix
$P$ of $\pi$. The transition edges on the paths corresponding to $J$
are precisely those corresponding to
$E_J(P,V(H)\setminus P)$. The path replacing $e^*$ requires one
transition if its branch ends have different colors and two if they
have the same color. Hence \eqref{eq:case2bound} shows that
the total number of transitions is at most $2k+2$.

Transitions on different subdivision paths are vertex-disjoint. The
only possible pair of transitions on the same path occurs on the path
replacing $e^*$: when two transitions are used there, keep at least
five vertices in the monochromatic interval between them. Thus the
transition edges form a matching.

For every integer $t$ with $5\le t\le M-5$, the interior of the path
replacing $e^*$ can be colored with exactly $t$ red vertices and the
required number of transitions. If its branch ends have different
colors, use one transition. If both ends are blue, use a red middle
interval; if both ends are red, use a blue middle interval. For the
moment, restrict to
$5+h\le t\le M-5-h$.

Let $0=R_0<R_1<\cdots<R_q=n-M$ be the distinct recorded red counts
outside this path interior. Since $M\ge\alpha n-1$, the preceding
bound gives
$R_{a+1}-R_a<M-10-2h$
for sufficiently large $n$. Hence the integer intervals $[R_a+5+h,\;R_a+M-5-h]$
overlap consecutively, and their union contains every integer from
$5+h$ to $n-5-h$. Since $|X|=n/2+o(n)$, we may choose a recorded
coloring and a value of $t$ in the above range so that the resulting
coloring of $F$ has exactly $|X|$ red vertices.

The transition edges form a matching of size at most $2k+2$. Map them
injectively to the matching $M_Q$ fixed in Step~1, with red endpoints
mapped to $X$ and blue endpoints to $Y$.

For each transition on a path of length four, choose between its two
possible positions so that the side containing only one internal
vertex is mapped to a core endpoint of the corresponding edge of
$M_Q$. This is possible because no edge of $Q$ has two exceptional
endpoints. Changing the position on one such path changes the number
of red vertices outside the path replacing $e^*$ by one. If the net
change is $\Delta$, then $|\Delta|\le h$. Replacing $t$ by
$t-\Delta$ restores the total number of red vertices. Since
$5+h\le t\le M-5-h$, the adjusted value still lies in $[5,M-5]$.

Set $\tau_0=\alpha^2$. On each subdivision path, consider its maximal
monochromatic intervals, and let $\mathcal I_{\mathrm{s}}$ be the
family of those having order less than $\tau_0n$. We shall use the
following bound:
\begin{equation}\label{eq:case2intervals}
 \sum_{I\in\mathcal I_{\mathrm{s}}}|I|
 \le2\beta n+3\tau_0n+30h.
\end{equation}
Indeed, the short subdivision paths contribute at most
$\beta n+O_h(1)$ vertices. On every long path of $J$ except possibly
the one with a moving transition, the short monochromatic interval
has bounded order, so these paths contribute only $O_h(1)$ vertices
in total. The possible active long path contributes fewer than
$\tau_0n$ vertices to $\mathcal I_{\mathrm{s}}$, since
$2\tau_0<\alpha$. Finally, the path replacing $e^*$ has at most three
monochromatic intervals, one of which has order greater than
$\tau_0n$ for sufficiently large $n$, so its short intervals contain
fewer than $2\tau_0n$ vertices in total. This proves
\eqref{eq:case2intervals}.

\medskip
\noindent\textbf{Step 3. Embedding the collars and branch vertices.}
\medskip

For each branch end of a subdivision path, call the first internal
vertex its \emph{branch collar}. On each monochromatic side of a
transition whose interval has order at least three, call the neighbor
of the transition endpoint within that interval its \emph{transition
collar}. The two collars may coincide.

The matching $M_Q$ fixes the images of all transition endpoints. If a
monochromatic interval has order two, then its transition endpoint is
also the branch collar. By Step~2, this can occur only on a path of
length four, and that endpoint is mapped to a core vertex. Otherwise,
map the transition collar to a fresh core neighbor of the image of
the transition endpoint. This is possible by \eqref{eq:case2exceptional} 
when the latter is exceptional, and by the fact that every core vertex has at most $3\eta n$ non-neighbors in its core otherwise. Since there are at most $2k+2$ transitions, only $O_h(1)$ such choices are required, and they can be made pairwise distinct and outside $R_X\cup R_Y$.

Now fix a branch vertex $v$. For each path incident with $v$, retain
the image of its branch collar if it has already been embedded as a
transition endpoint or transition collar, otherwise map it to a fresh
vertex in the appropriate core, adjacent to the next embedded
transition collar whenever necessary. These choices are possible since every vertex in the relevant core has at most $3\eta n$ non-neighbors there. The resulting set has order at most $d_H(v)\le h$, and every vertex in it has at most $3\eta n$ non-neighbors in the core corresponding to the color of $v$.
Hence, by the hierarchy, this set has a fresh common neighbor in that
core outside $R_X\cup R_Y$, use it as the image of $v$. Repeating this
greedily embeds all branch vertices and collars.

\medskip
\noindent\textbf{Step 4. Decomposing the monochromatic intervals.}
\medskip

On each subdivision path, consider its maximal monochromatic intervals.
Every interval of order at most four is already completely embedded.
Indeed, an interval with two branch endpoints is a whole subdivision
path and hence has order at least five, while an interval with two
transition endpoints has order at least five by Step~2. Thus any
interval of order at most four has one branch endpoint $b$ and one
transition endpoint $t$, and is of the form
$(b,t)$, $(b,c,t)$, or $(b,c_1,c_2,t)$. These are precisely the
configurations embedded in Step~3, with the possible coincidences of
the branch and transition collars.

For every monochromatic interval $I$ of order at least five, delete
its two endpoints and denote the remaining subpath by $P_I$. Then
$|P_I|=|I|-2\ge3$, and its two endpoints are distinct core collars
already embedded in Step~3. The pairs of endpoints of these subpaths are pairwise disjoint. Let $\mathcal P_X$ and $\mathcal P_Y$ be the
families of red and blue subpaths, respectively.

Let $S_X$ be the set of vertices already embedded in $X$, excluding
the endpoints of the subpaths in $\mathcal P_X$, and write
$Z_X=X\setminus S_X$; define $S_Y$ and $Z_Y$ analogously. Since these
subpaths, together with the remaining embedded vertices, partition
the two color classes,
\begin{equation}\label{eq:case2-task-sums}
 \sum_{P\in\mathcal P_X}|P|=|Z_X|,
 \qquad
 \sum_{P\in\mathcal P_Y}|P|=|Z_Y|.
\end{equation}
Since each subdivision path contributes one initial monochromatic
interval and each transition creates one additional interval, each
of $\mathcal P_X$ and $\mathcal P_Y$ contains at most
$K:=h+2k+2$ subpaths.

Let $\tau_b=\tau_0/2$ and $\sigma=3\beta+4\alpha^2$, and let
$\mathcal P_{\mathrm{s}}$ consist of the subpaths in
$\mathcal P_X\cup\mathcal P_Y$ of order less than $\tau_b n$.
For sufficiently large $n$, every such subpath comes from a
monochromatic interval of order less than $\tau_0n$. Hence
\eqref{eq:case2intervals} gives $\sum_{P\in\mathcal P_{\mathrm{s}}}|P|
\le2\beta n+3\alpha^2n+30h\le\sigma n$. Since, apart from $O_h(1)$ branch and transition vertices, all vertices of $S_X\cup S_Y$ lie in the intervals counted in \eqref{eq:case2intervals}, the hierarchy and
$|X|,|Y|\ge(1/2-\eta)n$ give $|Z_X|,|Z_Y|\ge0.49n$.
Consequently, each of $\mathcal P_X$ and $\mathcal P_Y$ contains a
subpath of order at least $\tau_b n$; otherwise
\eqref{eq:case2-task-sums} would give $|Z_X|\le\sigma n$ or
$|Z_Y|\le\sigma n$, respectively.

\medskip
\noindent\textbf{Step 5. Completing the remaining subpaths.}
\medskip

Define $C'_X=C_X\cap Z_X$, $E'_X=D_B\cap Z_X$, $C'_Y=C_Y\cap Z_Y$, and $E'_Y=D_A\cap Z_Y$. The endpoints of all remaining subpaths lie in the corresponding cores. By \eqref{eq:case2intervals} and the hierarchy,
we have $|C'_X|,|C'_Y|\ge n/3$ and $|E'_X|,|E'_Y|\le 2\varepsilon n/\eta$.

Moreover, for $Z\in\{X,Y\}$,
\begin{equation}\label{eq:case2degrees}
 |C'_Z\setminus N(c)|\le3\eta n
 \qquad(c\in C'_Z).
\end{equation}

Since all choices in Steps~2 and~3 avoided $R_X\cup R_Y$, every vertex
of $E'_X$ still has its two reserved neighbors in $C'_X$, and the
analogous statement holds for $E'_Y$.

We complete the two parts separately. Fix $\Gamma\in\{X,Y\}$ and write
$C=C'_\Gamma$ and $E=E'_\Gamma$. Let $P_1,\ldots,P_q$ be the remaining subpaths assigned to this part. For each $i\in[q]$, let $r_i=|P_i|$ and denote the endpoints of $P_i$ by $x_i$ and $y_i$. By Step~4, some $P_{i_*}$ has $r_{i_*}\ge\tau_b n$. Set $u=x_{i_*}$ and $v=y_{i_*}$.

List $E=\{w_1,\ldots,w_t\}$, and write
$a_i=a_{w_i}$ and $b_i=b_{w_i}$ for the two reserved core neighbors
of $w_i$. Set $b_0=u$. For each $i\in[t]$, choose a fresh common
neighbor $c_i\in C$ of $b_{i-1}$ and $a_i$, avoiding the endpoints
of the remaining subpaths and all reserved vertices. Concatenating the
paths with vertex sequences $(b_{i-1},c_i,a_i,w_i,b_i)$, $i\in[t]$, gives a path $Q$ from $u$ to $b_t$ containing every vertex of $E$. The vertices $c_i$ can be chosen greedily by \eqref{eq:case2degrees}, since any two vertices of $C$ have at least $|C|-8\eta n$ common neighbors in $C$. If $t=0$, let $Q$ consist only of $u$ and set $b_t=u$. In either case,
$|Q|=1+4t=o(\tau_b n)$.

We first complete all remaining subpaths with $r_i<\tau_b n$ inside
the unused core. For such a subpath with endpoints $x_i,y_i$, choose
$r_i-3$ internal vertices successively, and then choose the last
internal vertex as a fresh common neighbor in $C$ of the current
endpoint and $y_i$. Throughout this process, at most
$\sigma n+o(\eta n)+O_h(1)$ vertices are unavailable, including the
reserved vertices not used on $Q$. Since $|C|\ge n/3$, every vertex
of $C$ has at most $4\eta n$ non-neighbors in $C$, and any two
vertices of $C$ therefore have at least $|C|-8\eta n$ common
neighbors in $C$. 

Let $\mathcal L$ be the set of indices $i$ for which the subpath of
order $r_i$ is still unembedded. Thus $r_i\ge\tau_b n$ for every
$i\in\mathcal L$, and $i_*\in\mathcal L$. After deleting the vertices already used by $Q$ and by the completed short subpaths, remove also the endpoints of the subpaths indexed by $\mathcal L$ from the remaining vertices of $C$, and denote the resulting set by $C_0$. We shall partition $C_0$ into sets $B_i$, $i\in\mathcal L$,
with
$$
|B_i|=r_i-2\quad\text{for }i\ne i_*,
\qquad
|B_{i_*}|=r_{i_*}-|Q|-1.
$$

The remaining subpaths, including their already embedded endpoints,
account for all vertices of $C\cup E$. Hence
$|C|=\sum_{i=1}^q r_i-t$. The path $Q$ contains $|Q|-t$ vertices of
$C$, while the subpaths already completed use
$\sum_{i\notin\mathcal L}r_i$ vertices of $C$. Among the
$2|\mathcal L|$ endpoints of the remaining subpaths, only $u$
already lies on $Q$. Therefore
$$
|C_0|
=\sum_{i\in\mathcal L}r_i-|Q|-2|\mathcal L|+1
=\sum_{i\in\mathcal L\setminus\{i_*\}}(r_i-2)
 +(r_{i_*}-|Q|-1).
$$
Thus the prescribed sizes sum to $|C_0|$. Since
$r_i\ge\tau_b n$ for every $i\in\mathcal L$ and
$|Q|=o(\tau_b n)$, each $B_i$ has order at least
$\tau_b n/2$ for sufficiently large $n$.

Set $N_0=|C_0|$. Since every $B_i$ has order at least
$\tau_b n/2$ and $N_0\le n$, we have
$N_0\ge\tau_b n/2$ and $|B_i|\ge(\tau_b/2)N_0$ for every
$i\in\mathcal L$. Let $W$ consist of $C_0$, the endpoints of the
subpaths indexed by $\mathcal L$, and $b_t$. All vertices of $W$ lie in the core. Hence \eqref{eq:case2degrees} gives
$|C_0\setminus N(z)|\le4\eta n\le(8\eta/\tau_b)N_0$
for every $z\in W$. Moreover, $|\mathcal L|\le K$ and, for sufficiently large $n$, $|W|\le N_0+2K+1\le2N_0$. Hence, by
\cref{allocation} and the hierarchy, $C_0$ can be partitioned into
the prescribed sets $B_i$, $i\in\mathcal L$, so that every
$z\in W$ has fewer than $|B_i|/10$ non-neighbors in each $B_i$.

For $i\in\mathcal L\setminus\{i_*\}$, every vertex of
$G[B_i\cup\{x_i,y_i\}]$ has degree at least $\frac{9|B_i|}{10}-1>\frac{|B_i|+3}{2}$. Since this graph has order $|B_i|+2$,  \cref{hpaths}\textup{(i)} yields a Hamilton $x_i$--$y_i$ path. Its order is $|B_i|+2=r_i$, as required.

The same argument applied to
$G[B_{i_*}\cup\{b_t,v\}]$ gives a Hamilton $b_t$--$v$ path
containing all vertices of $B_{i_*}$. Concatenating this path with
$Q$ gives a $u$--$v$ path of order $|Q|+|B_{i_*}|+1=r_{i_*}$.

Thus all remaining subpaths in the chosen part are completed with
their prescribed orders and together cover the whole part. Repeating
the argument for the other part completes the embedding.

Together with the previously embedded vertices and transition edges,
these paths form a color-preserving copy of $F$. Since $|V(F)|=n$, this copy is spanning, and the path replacing $e_i$ has length $\ell_i$ for every
$i\in[h]$. This proves the theorem in Extremal Case~2.
\end{proof}
\medskip
\begin{proof}[Proof of \cref{thm:main}]
If $G$ is $\varepsilon$-non-extremal, the result follows from \cref{sc3}. Otherwise $G$ lies in Extremal Case~1 or Extremal Case~2, and the result follows from \cref{subsec:extremal1} or \cref{subsec:extremal2}, respectively.
\end{proof}

\section{Concluding remarks}\label{sc5}

Two questions seem particularly natural. First, the restriction on the
total length of the short subdivision paths is used only to keep the
part embedded before the long paths sufficiently small. It would be
interesting to know whether it can be removed.

\begin{question}\label{ques51}
Does \cref{thm:main} remain true for arbitrary prescribed lengths
$\ell_1,\ldots,\ell_h\ge4$ satisfying
$\sum_{i=1}^h\ell_i=n-|V(H)|+h$?
\end{question}

The proof also suggests that the correct additive term should depend on
the structure of $H$ rather than only on its number of edges.

\begin{question}\label{ques52}
For a fixed graph $H$, determine the smallest integer $c(H)$ such that
$\delta(G)\ge n/2+c(H)$ guarantees every admissible prescribed
spanning $H$-subdivision for all sufficiently large $n$. 
\end{question}


\begin{thebibliography}{99}

\bibitem{babu-diwan}
Ch. S. Babu and A. A. Diwan,
Subdivisions of graphs: A generalization of paths and cycles,
\emph{Discrete Math.} 308 (2008), 4479--4486.





\bibitem{cheng-wang-yan-linked}
Y. Cheng, Z. Wang and J. Yan,
A Dirac-type theorem for arbitrary Hamiltonian $H$-linked digraphs,
\emph{arXiv:2401.17475}.

\bibitem{chizmar2016}
E. Chizmar, C. Magnant and P. Salehi Nowbandegani,
Note on semi-linkage with almost prescribed lengths in large graphs,
\emph{Graphs Combin.} 32 (2016), 881--886.

\bibitem{coll2019}
V. E. Coll, C. Magnant and P. Salehi Nowbandegani,
Degree sum and graph linkage with prescribed path lengths,
\emph{Discrete Appl. Math.} 257 (2019), 85--94.


\bibitem{dirac1952}
G. A. Dirac,
Some theorems on abstract graphs,
\emph{Proc. London Math. Soc.} (3) 2 (1952), 69--81.

\bibitem{gilfernandez2023}
I. Gil Fern\'andez, J. Hyde, H. Liu, O. Pikhurko and Z. Wu,
Disjoint isomorphic balanced clique subdivisions,
\emph{J. Combin. Theory Ser. B} 161 (2023), 417--436.


\bibitem{Khan}
I. Khan,
\emph{Spanning subgraphs in graphs and hypergraphs},
Ph.D. thesis, Rutgers University, 2011.

\bibitem{kim-length-2026}
J. Kim, H. Liu, Y. Tang, G. Wang, D. Yang and F. Yang,
Extremal density for subdivisions with length or sparsity constraints,
\emph{J. Combin. Theory Ser. B} 177 (2026), 67--104.






\bibitem{Lee}
H. Lee,
Spanning subdivisions in dense digraphs,
\emph{European J. Combin.} 124 (2025), Article 104059.



\bibitem{liu-montgomery}
H. Liu and R. Montgomery,
A solution to Erd\H{o}s and Hajnal's odd cycle problem,
\emph{J. Amer. Math. Soc.} 36 (2023), 1191--1234.

\bibitem{luan2023}
B. Luan, Y. Tang, G. Wang and D. Yang,
Balanced subdivisions of cliques in graphs,
\emph{Combinatorica} 43 (2023), 885--907.



\bibitem{Pavez}
M. Pavez-Sign\'e,
Spanning subdivisions in Dirac graphs,
\emph{Combin. Probab. Comput.} 33 (2024), 121--128.

\bibitem{lee-pavez-petrov}
M. Pavez-Sign\'e, H. Lee and T. Petrov,
Spanning clique subdivisions in pseudorandom graphs,
\emph{Combin. Probab. Comput.} (2026), 1--15.


\bibitem{thomassen1984}
C. Thomassen,
Subdivisions of graphs with large minimum degree,
\emph{J. Graph Theory} 8 (1984), 23--28.

\bibitem{wang2023}
Y. Wang,
Balanced subdivisions of a large clique in graphs with high average degree,
\emph{SIAM J. Discrete Math.} 37 (2023), 1262--1274.

\bibitem{wang-cheng-yan2026}
Z. Wang, Y. Cheng and J. Yan,
Spanning $H$-subdivisions and perfect $H$-subdivision tilings in dense
digraphs,
\emph{Combin. Probab. Comput.} (2026), 1--31.

\end{thebibliography}
\end{document}